\newtheorem*{conj*}{Conjecture}
\newtheorem{theorem}{Theorem}[section]
\theoremstyle{definition}
\newtheorem*{remark}{Remark}
\theoremstyle{plain}
\newtheorem{lemma}[theorem]{Lemma}
\newtheorem{corollary}[theorem]{Corollary}
\newcommand{\Z}{\mathbb{Z}}
\newcommand{\Q}{\mathbb{Q}}
\newcommand{\N}{\mathbb{N}}
\newcommand{\SL}{\operatorname{SL}}
\newcommand{\SU}{\operatorname{SU}}
\newcommand{\re}[1]{\text{Re}\(#1\)}
\renewcommand{\pmod}[1]{\,\,({\rm mod}\,\,{#1})}
\numberwithin{equation}{section}
\newtheoremstyle{example}
  {\topsep}   % ABOVESPACE
  {\topsep}   % BELOWSPACE
  {\normalfont}  % BODYFONT
  {0pt}       % INDENT (empty value is the same as 0pt)
  {\bfseries} % HEADFONT
  {.}         % HEADPUNCT
  {5pt plus 1pt minus 1pt} % HEADSPACE
  {}          % CUSTOM-HEAD-SPEC
\theoremstyle{example}
\def\({\left(}
\def\){\right)}
\def\lp{\left(}
\def\rp{\right)}
\begin{document}
\title[Vafa--Witten invariants of $K3$ surfaces]{Exact formulae and Tur\'{a}n inequalities for Vafa--Witten invariants of $K3$ surfaces}

\author{Daniel R. Johnston}
\address{School of Science, The University of New South Wales, Canberra, Australia}
\email{daniel.johnston@adfa.edu.au}

\author{Joshua Males}
\address{Department of Mathematics, Machray Hall, University of Manitoba, Winnipeg,
	Canada}
\email{joshua.males@umanitoba.ca}

\thanks{The research conducted by the second author for this paper is supported by the Pacific Institute for the Mathematical Sciences (PIMS). The research and findings may not reflect those of the Institute. \\
Competing interests: The authors declare none}

\keywords{Vafa--Witten invariants, exact formulae, asymptotics, Tur\'{a}n inequalities.}
\subjclass[2020]{11F12, 11F23}

\begin{abstract} 
We consider three different families of Vafa--Witten invariants of K3 surfaces. In each case, the partition function that encodes the Vafa--Witten invariants is given by combinations of twisted Dedekind $\eta$-functions. By utilising known properties of these $\eta$-functions, we obtain exact formulae for each of the invariants and prove that they asymptotically satisfy all higher-order Tur\'{a}n inequalities.
\end{abstract}

\maketitle
\section{Introduction and statement of results}

In 1994, Vafa and Witten \cite{VW} introduced invariants that count the number of solutions to the $N=4$ supersymmetric Yang--Mills equations over four dimensional spaces, now called Vafa--Witten invariants. Given a surface and group action, many results in the literature focus on providing the generating function $Z$ of the Vafa--Witten invariants (also known as the partition function).  These generating functions are often linear combinations of modular objects. Thus, with the tools of modular forms at our disposal, we are in a position to obtain precise information on the Vafa--Witten invariants in question. One of the most natural family of surfaces, which we consider throughout, are the $K3$ surfaces, and there have been several recent results for the generating function of Vafa--Witten invariants in this case \cite{JK,TT,Thomas}. 

In what follows, we let
\begin{equation}\label{etadef}
    \eta(q)\coloneqq q^{\frac{1}{24}}\prod_{n=1}^\infty(1-q^n)
\end{equation}
be the usual Dedekind $\eta$-function, where $q\coloneqq e^{2\pi i \tau}$ with $\tau \in \mathbb{H}$. 

\subsection{Formulae for Vafa--Witten invariants}
Tanaka and Thomas proved the following formula for the generating function of $\SU(r)$ Vafa--Witten invariants on $K3$ surfaces, thereby proving a conjecture of Vafa and Witten (here and throughout, $r \in \mathbb{N}$).
\begin{theorem}[Theorem 5.24 of \cite{TT}]\label{vafaw1}
For a $K3$ surface, the partition function of rank $r$ trivial determinant $\SU(r)$ Vafa--Witten invariants is
\begin{align*}
    Z^{SU(r)}(q) \coloneqq \sum_{n \in \mathbb{Z}}\alpha_{1,r}(n) q^n = \sum_{d \mid r} \frac{d}{r^2} q^r \sum_{j=0}^{d-1} \eta\left(\zeta_d^j q^{\frac{r}{d^2}} \right)^{-24},
\end{align*}
where $\zeta_d = e^{\frac{2\pi i}{d}}$.
\end{theorem}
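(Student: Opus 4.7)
My approach would follow the algebro-geometric strategy of Tanaka--Thomas: recast the partition function in terms of moduli of stable Higgs pairs on the $K3$ surface $X$ and evaluate via virtual torus localization. Since $K_X \cong \mathcal{O}_X$, a Higgs pair $(E,\phi)$ on $X$ is simply a torsion-free sheaf equipped with an endomorphism, and rescaling $\phi$ gives a natural $\mathbb{C}^*$-action on the moduli space whose fixed locus is compact, reducing the invariant to a sum of contributions from these fixed components.

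The main steps would be: (1) set up the moduli stack of Gieseker-semistable rank-$r$ trivial-determinant Higgs sheaves with second Chern number $n$, together with its symmetric perfect obstruction theory defining the virtual invariant $\alpha_{1,r}(n)$; (2) show that at each $\mathbb{C}^*$-fixed Higgs pair the sheaf splits as $E = \bigoplus_{j=0}^{d-1} E_j$ with the summands cyclically permuted by $\phi$, parametrised by a divisor $d \mid r$; (3) identify each fixed component with a finite quotient of a Hilbert scheme of points on $X$, where the length records how the second Chern class distributes among the $E_j$; and (4) evaluate the resulting virtual Euler characteristics using Göttsche's formula
\[
\sum_{m \geq 0} \chi\lp\mathrm{Hilb}^m(X)\rp q^m = \prod_{n \geq 1}\lp 1 - q^n \rp^{-24}.
\]
The cyclic permutation structure of the fixed pair produces the twist by $\zeta_d^j$ in the equivariant localisation weights, the substitution $q \mapsto q^{r/d^2}$ records that each summand $E_j$ sits at second Chern number $r/d^2$ of the total, and the $q^r$ shift together with the prefactor $d/r^2$ comes from the overall Chern-character normalisation and the order of the finite automorphism group of the cyclic splitting (up to the standard $\mu_r$-scaling on trivial-determinant sheaves).

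The main obstacle is steps (2)--(3): rigorously classifying the $\mathbb{C}^*$-fixed loci and reducing the virtual localisation to a topological Euler characteristic on each stratum. One must handle strictly semistable Higgs pairs, verify that the Behrend--Fantechi virtual normal bundle takes the correct equivariant form on each component, and check that the contributions from the moving parts of the obstruction complex do not spoil the reduction to $\chi\lp\mathrm{Hilb}^{\bullet}(X)\rp$. This is the substance of the deformation-theoretic argument in \cite{TT}, and it seems difficult to sidestep: establishing the modularity of $Z^{\SU(r)}$ from a purely analytic standpoint, without the geometric input, would essentially require rebuilding the same fixed-locus decomposition from scratch. For that reason, I would treat the theorem as a black box and focus the rest of the paper on exploiting the explicit $\eta$-combination it provides.
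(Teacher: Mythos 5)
This statement is quoted verbatim from Tanaka--Thomas (Theorem 5.24 of \cite{TT}); the paper offers no proof of it and simply uses it as input, so your decision to treat it as a black box is exactly what the authors do. Your sketch of the $\mathbb{C}^*$-localization on moduli of Higgs pairs and the reduction to G\"ottsche's formula is a fair high-level summary of the strategy in \cite{TT}, but since neither you nor the paper actually carries out that argument, there is nothing further to compare.
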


Moreover, in \cite[Theorem 5.24]{TT} it is shown that the same generating function also gives the rank $r$, trivial determinant, weighted Euler characteristic Vafa--Witten invariants, and so our subsequent results also apply to these invariants.

In \cite{JK}, Jiang and Kool determined the generating function for $SU(p)/\mathbb{Z}_p$ Vafa--Witten invariants for prime $p$. To describe the result, fix a Chern class $c_1 \in H^2(S,\mathbb{Z})$, let $\mu_p$ be the cyclic group of order $p$, and let 
\begin{align*}
    \delta_{a,b} \coloneqq \begin{cases} 1 &\text{ if } a-b \in pH^2(S,\mathbb{Z}), \\
    0 &\text{otherwise}.
   \end{cases}
\end{align*}

\begin{theorem}[Theorem 1.4 of \cite{JK}]\label{vafaw2}
For a $K3$ surface $S$, prime $p$, generic polarisation, and $c_1 \in H^2(S,\mathbb{Z})$ algebraic, we have the partition function
\begin{align*}
   Z_{c_1}^{SU(p)/{\mathbb{Z}_p}} (q) =  \sum_{w \in H^2(S,\mu_p)} e^{\frac{2\pi i (w\cdot c_1)}{p}} Z_w(q), 
\end{align*}
where $w \cdot c_1$ is the natural inner product on $H^2(S,\mathbb{Z})$ (see \cite[pages 45 and 46]{VW}) and
\begin{align}\label{gen2}
    Z_w(q) \coloneqq \sum_{n \in \Z}\alpha_{2,p}(w;n) q^\frac{n}{p} = \frac{\delta_{w,0}}{p^2} \eta\left(q^p\right)^{-24} + \frac{1}{p}\sum_{j=0}^{p-1} e^{-\frac{\pi ij w^2}{p}}   \eta\left(\zeta_p^j q^{\frac{1}{p}}\right)^{-24}.
\end{align}
\end{theorem}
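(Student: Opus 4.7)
The plan is to follow the strategy of Jiang and Kool in \cite{JK}, using Theorem \ref{vafaw1} as the main input. The key structural observation is that the $\SU(p)/\Z_p$ moduli of sheaves on a $K3$ surface $S$ fibers over the finite abelian group $H^2(S,\mu_p)$ of $\mu_p$-gerbe classes, so Pontryagin duality should produce a Fourier decomposition
\[
Z_{c_1}^{\SU(p)/\Z_p}(q) = \sum_{w \in H^2(S,\mu_p)} e^{\frac{2\pi i (w\cdot c_1)}{p}} Z_w(q),
\]
where $Z_w(q)$ is the generating series of $w$-twisted rank $p$ Vafa--Witten invariants. Producing this decomposition is the formal half of the argument: it follows from orthogonality of characters on $H^2(S,\mu_p)$, together with the interpretation of $c_1$ as a character on this group via the intersection pairing.

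The substantive half is computing each $Z_w(q)$ and matching it to the right-hand side of \eqref{gen2}. Here I would adapt the Tanaka--Thomas virtual localisation \cite{TT} to the twisted setting. For the untwisted class $w=0$ the moduli space should split into a ``diagonal'' component contributing $p^{-2}\eta(q^p)^{-24}$ (which is absent for $w\neq 0$, giving rise to the Kronecker $\delta_{w,0}$), together with $p$ components indexed by $j=0,\ldots,p-1$ each contributing $p^{-1}\eta(\zeta_p^j q^{1/p})^{-24}$. For nonzero $w$, a quadratic Gauss-type phase $e^{-\pi i j w^2/p}$, determined by the intersection pairing on $H^2(S,\mu_p)$, twists the $j$-th term; this phase is the expected anomaly coming from the $\mu_p$-twist of the moduli stack.

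The hard part will be this twisted virtual localisation: one must verify that the perfect obstruction theory on the $w$-twisted moduli space has the correct amplitude, identify the $\mathbb{C}^\times$-fixed loci with lower-rank moduli shifted by the twist, and compute the equivariant Euler class of the virtual normal bundle in order to produce the factor $\eta(\zeta_p^j q^{1/p})^{-24}$ together with the precise $w$-dependent phase. Once these geometric inputs are secured, assembling \eqref{gen2} reduces to summing the fixed-locus contributions and collecting terms.
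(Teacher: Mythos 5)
First, note that the paper does not prove this statement at all: it is imported verbatim as Theorem 1.4 of \cite{JK}, and everything in the present paper takes \eqref{gen2} as a black-box input. So the only question is whether your sketch would establish the Jiang--Kool result, and as written it would not. The formal half is essentially definitional: the expansion of $Z_{c_1}^{SU(p)/\mathbb{Z}_p}$ as a character sum over 't~Hooft fluxes $w\in H^2(S,\mu_p)$ weighted by $e^{2\pi i (w\cdot c_1)/p}$ is how the $SU(p)/\mathbb{Z}_p$ partition function is \emph{defined} in the S-duality framework of \cite{VW} and \cite{JK}; there is nothing to derive there beyond orthogonality of characters. The entire content of the theorem is the evaluation of each $Z_w(q)$, and your proposal defers every substantive step of that evaluation (``the hard part will be\dots'', ``once these geometric inputs are secured\dots''), so what you have is a plan, not a proof.

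Moreover, the plan points at the wrong mechanism for $K3$ surfaces. The $\mathbb{C}^\times$-localisation picture you describe --- fixed loci identified with lower-rank moduli, equivariant Euler classes of virtual normal bundles producing $\eta(\zeta_p^j q^{1/p})^{-24}$ --- is the structure of the Tanaka--Thomas computation in the presence of a contributing monopole branch. For a $K3$ surface the condition $h^{0,2}\neq 0$ triggers a cosection/vanishing argument: only the instanton branch survives, and the invariants reduce to (weighted) Euler characteristics of moduli of $w$-twisted Gieseker-semistable sheaves. In \cite{JK} those Euler characteristics are computed via Yoshioka-type deformation and derived (twisted Fourier--Mukai) equivalences identifying the twisted moduli spaces with Hilbert schemes of points on a $K3$, whence the factor $\eta^{-24}$ by G\"ottsche's formula; the prefactors $p^{-2}$ and $p^{-1}$, the Kronecker $\delta_{w,0}$, and the quadratic phase $e^{-\pi i j w^2/p}$ arise from enumerating determinants and second Chern classes modulo $p$ and the resulting Gauss-sum bookkeeping against the intersection form, not from an equivariant Euler class of a virtual normal bundle. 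If the goal is to prove the theorem rather than cite it, that lattice-theoretic and moduli-theoretic bookkeeping is precisely the part that must be carried out explicitly, and nothing in your sketch does so.
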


We also consider the twisted Vafa--Witten invariants for smooth projective $K3$ surfaces as described by Jiang (see also \cite[Definition 5.13]{Jiang}).

\begin{theorem}[Theorem 6.36 of \cite{Jiang}]\label{vafaw3}
Let $S$ be a smooth projective $K3$ surface with Picard number $\rho(S)$ and $p$ prime. Then we have the partition function
\begin{align}\label{gen3}
    Z_{tw}^{SU(p)/\mathbb{Z}_p}(q) \coloneqq \sum_{n \in\Z}\alpha_{3,p}(n) q^\frac{n}{p} = \frac{q^p}{p^2} \eta\left(q^p\right)^{-24} + q^{p}\left( p^{21}  \eta\left(q^{\frac{1}{p}}\right)^{-24} +p^{\rho(S) -1} \sum_{j=1}^{p-1} \eta\left(\zeta_p^j q^{\frac{1}{p}}\right)^{-24}  \right).
\end{align}
\end{theorem}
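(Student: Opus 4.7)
The plan is to follow a geometric decomposition strategy on the moduli space of twisted semistable sheaves on $S$. Recall that the twisted Vafa--Witten invariants are defined via weighted Euler characteristics of moduli of $\mu_p$-twisted sheaves of rank $p$ on the $K3$ surface $S$, with the twist encoded by a class in $H^2(S,\mu_p)$. The first step is to stratify this moduli space using the natural $\mu_p$-action induced by the gerbe structure, yielding three distinct fixed loci: an untwisted stratum, a stratum corresponding to the regular representation, and strata corresponding to non-trivial characters $\zeta_p^j$ for $1\le j\le p-1$.

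For each fixed locus, I would then apply G\"ottsche's formula $\sum_{n\ge 0}\chi(S^{[n]})q^n = q\,\eta(q)^{-24}$ (using $\chi(S)=24$ for a $K3$ surface), after identifying the stratum with a product of Hilbert schemes of points up to deformation equivalence. The three strata respectively produce the three terms in the formula: the untwisted stratum contributes $\frac{q^p}{p^2}\eta(q^p)^{-24}$; the regular-representation stratum contributes $q^p\cdot p^{21}\eta(q^{1/p})^{-24}$; and the remaining $p-1$ character strata jointly contribute $q^p\cdot p^{\rho(S)-1}\sum_{j=1}^{p-1}\eta(\zeta_p^j q^{1/p})^{-24}$. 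A natural cross-check at this stage is to compare the untwisted stratum with Theorem \ref{vafaw1} at rank $r=p$, which should agree after the rescaling $q\mapsto q^p$.

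The hard part will be justifying the combinatorial prefactors $\tfrac{1}{p^2}$, $p^{21}$, and $p^{\rho(S)-1}$. These arise from counting classes in $H^2(S,\mu_p)$: the factor $p^{21} = p^{b_2(S)-1}$ reflects the count of flat $\SU(p)/\Z_p$-connections modulo an overall normalization, while $p^{\rho(S)-1}$ counts only the algebraic classes modulo the chosen polarization. Establishing these counts rigorously requires analysing the image of the natural map $H^2(S,\mu_p)_{\mathrm{alg}}\to H^2(S,\mu_p)$ and carefully matching the virtual class restrictions to each stratum, together with the overall $q^p$ shift coming from the expected virtual dimension of the twisted moduli space. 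The delicate bookkeeping of algebraic versus transcendental contributions, rather than the modular computation per se, is what makes the argument substantive.
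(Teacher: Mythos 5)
This statement is not proved in the paper at all: it is imported verbatim as Theorem 6.36 of Jiang's \emph{Counting twisted sheaves and S-duality} (reference \cite{Jiang}), exactly as Theorems \ref{vafaw1} and \ref{vafaw2} are imported from \cite{TT} and \cite{JK}. So there is no in-paper proof to compare against; the authors' entire contribution here is the citation, and everything they actually prove starts downstream of \eqref{gen3}. Your proposal is therefore an attempt to reconstruct Jiang's argument, and judged on that basis it is a plan rather than a proof. The ingredients you name are the right ones in broad outline --- moduli of twisted sheaves on a $K3$ are deformation equivalent to Hilbert schemes of points (this is Yoshioka's work, which you would need to invoke explicitly), G\"ottsche's formula converts their Euler characteristics into $\eta(q)^{-24}$, and the prefactors count classes in $H^2(S,\mu_p)\cong(\Z/p)^{22}$ versus its algebraic part of order $p^{\rho(S)}$ --- but the mechanism you describe does not match how the computation is actually organized. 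The decomposition is not a stratification of one moduli space by fixed loci of a $\mu_p$-action with an ``untwisted stratum'' and a ``regular representation stratum''; it is a sum over gerbes (equivalently, over $w\in H^2(S,\mu_p)$ indexing twisted moduli spaces), and the roots of unity $\zeta_p^j$ enter through a discrete Fourier transform over $j$ that enforces congruence conditions on Chern classes, exactly as in \eqref{gen2}. The $\frac{1}{p^2}$ in front of $\eta(q^p)^{-24}$ is the rank-$p$, trivial-determinant normalization $d/r^2$ with $d=1$ (compare Theorem \ref{vafaw1}), not the count of an untwisted locus.

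More importantly, you explicitly defer the entire substantive content: justifying the exponents $21=b_2(S)-1$ and $\rho(S)-1$, the $q^p$ shift, and the identification of each twisted moduli component with a Hilbert scheme is the proof, and your proposal ends where that work begins. For the purposes of this paper none of this is needed --- you may, and should, simply cite \cite{Jiang} as the authors do, and reserve your effort for the results the paper actually establishes from \eqref{gen3}, namely Lemma \ref{anlem} and Theorems \ref{thm: main exact}--\ref{Thm: main asymp Turan}.
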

\begin{remark}
    Note that the generating functions in Theorems \ref{vafaw2} and \ref{vafaw3} consist of $\frac{n}{p}$th powers of $q$ unlike the generating function in Theorem \ref{vafaw1}, which only contains integer powers of $q$. However, in special cases, the non-integral powers of $q$ in \eqref{gen2} and \eqref{gen3} vanish. See Corollaries \ref{alpha2cor} and \ref{alpha3cor} for more information.
\end{remark}
 
To obtain an exact formula for each of the Vafa--Witten invariants discussed above, we first obtain an exact formula for the coefficients of $\eta(q)^{-24}$. This is achieved using a result of Zuckerman (see Section \ref{Sec: Zuc}), which relates the coefficients of a weakly holomorphic form to the principal part of the form at each cusp. 

\begin{lemma}\label{anlem}
    Let $a(n)$ denote the $n^{th}$ Fourier coefficient of $\eta(q)^{-24}$. Then, $a(n)=0$ for $n<-1$, $a(-1)=1$ and  $a(0)=24$. For $n>0$ we have 
    \begin{equation*}
        a(n)=\frac{2\pi}{n^{\frac{13}{2}}}\sum_{k=1}^\infty\frac{1}{k}\sum_{\substack{0\leq h<k\\\gcd(h,k)=1}}\omega_{h,k}^{24}e^{-\frac{2\pi i(n+1)h}{k}}I_{13}\left(\frac{4\pi}{k}\sqrt{n}\right).
    \end{equation*}
    Moreover,
    \begin{align}
        a(n)&=\frac{2\pi}{n^{\frac{13}{2}}}I_{13}\left(4\pi\sqrt{n}\right)\left(1+O\left(\frac{1}{e^{2\pi\sqrt{n}}}\right)\right) \sim\frac{2\pi}{n^{\frac{13}{2}}}\frac{e^{4\pi\sqrt{n}}}{\sqrt{8\pi^2\sqrt{n}}} \label{aasym1},
    \end{align}
    where $I_{13}$ is the usual $I$-Bessel function.
\end{lemma}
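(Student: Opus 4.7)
The plan is to identify $\eta(q)^{-24}$ as a weakly holomorphic modular form of weight $-12$ on $\SL_2(\Z)$ and read off its principal part. Since $\SL_2(\Z)$ has only the cusp at $\infty$, and since $\eta(q)^{-24}=q^{-1}\prod_{n\geq 1}(1-q^n)^{-24}=q^{-1}(1+24q+O(q^2))$, one immediately obtains $a(n)=0$ for $n<-1$, $a(-1)=1$, and $a(0)=24$. The principal part at $\infty$ thus consists of the single term $q^{-1}$.

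For $n>0$ the exact formula is a direct application of Zuckerman's theorem (recalled in Section \ref{Sec: Zuc}). The weight $-12$ produces the Bessel index $1-(-12)=13$; the cusp sum collapses to $\infty$ alone; and the sum over principal-part Fourier indices collapses to the single index $m=1$, with the resulting offset $n-(-1)=n+1$ appearing in the exponential $e^{-2\pi i(n+1)h/k}$. The factor $\omega_{h,k}^{24}$ arises from the transformation law of $\eta^{-24}$ at the rational $h/k$; since $\eta^{24}=\Delta$ has trivial multiplier on $\SL_2(\Z)$, one in fact has $\omega_{h,k}^{24}=1$, but it is notationally convenient to retain this factor in order to match the Zuckerman formula verbatim.

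For the asymptotic, I would isolate the $k=1$ contribution, which gives the single term $\frac{2\pi}{n^{13/2}}I_{13}(4\pi\sqrt{n})$ since then $h=0$ and $\omega_{0,1}=1$. To control the tail I would use the crude bound $I_{13}(x)\ll e^x/\sqrt{x}$ valid for large $x$, together with the small-argument expansion $I_{13}(x)\ll x^{13}$ near $0$. For $k=2$ the Bessel argument is $2\pi\sqrt{n}$, contributing something of exponential order $e^{2\pi\sqrt{n}}$, a factor $e^{-2\pi\sqrt{n}}$ smaller than the main term. For $k\geq 3$ the arguments are even smaller, and summing the trivial bound $\varphi(k)\leq k$ over the $h$'s with these Bessel estimates yields a tail bound of the required order. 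The stated asymptotic then follows by inserting the classical expansion $I_{13}(x)\sim e^x/\sqrt{2\pi x}$ into the $k=1$ main term.

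The main obstacle is the tail estimate: one has to splice the two Bessel regimes (small and large argument) in order to obtain a uniform bound on the sum over $k\geq 2$ that dominates by $e^{-2\pi\sqrt{n}}$ relative to the $k=1$ term. Once this bookkeeping is carried out, the leading behaviour from the $k=1$ contribution is robust and produces the claimed growth.
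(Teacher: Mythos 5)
Your proposal follows the paper's argument essentially verbatim: read off the principal part $q^{-1}$ of $\eta(q)^{-24}$, feed it into Zuckerman's theorem with $\kappa=-12$ (hence Bessel index $13$, offset $n+1$), and isolate the $k=1$ term for the asymptotic; your splicing of the small- and large-argument Bessel regimes for the $k\ge 2$ tail is in fact more explicit than the paper's one-line appeal to \eqref{I-bessel asymp}. One parenthetical claim is false, though: $\omega_{h,k}^{24}\neq 1$ in general. For example $s(1,5)=\tfrac{(5-1)(5-2)}{60}=\tfrac15$, so $\omega_{1,5}^{24}=e^{24\pi i/5}\neq 1$. What the triviality of the multiplier of $\Delta$ actually gives is that the \emph{full} multiplier $\chi(\gamma_{h,k})=\omega_{h,k}^{24}\,e^{2\pi i(h'-h)/k}$ equals $1$, i.e.\ $\omega_{h,k}^{24}=e^{2\pi i(h-h')/k}$ with $hh'\equiv -1\pmod{k}$ (equivalently, the inner sum in the lemma is the Kloosterman-type sum $\sum_h e^{-2\pi i(nh+h')/k}$). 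Since you retain the factor $\omega_{h,k}^{24}$ in the final formula rather than replacing it by $1$, your stated identity is still correct, but the multiplier should be obtained from the $\eta$-transformation law as in \eqref{etatrans24}, not from the triviality claim.
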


We then prove the following theorem, which combined with Lemma \ref{anlem} gives exact formulae for the Vafa--Witten invariants.

\begin{theorem}\label{thm: main exact}
    For all $n\in\Z$, we have
    \begin{align}\label{exactalpha1}
        \alpha_{1,r}(n) = \sum_{s\mid\left(n-r,r\right)}\frac{1}{s^2}a\left(\frac{(n-r)r}{s^2}\right),
    \end{align}
    along with
    \begin{align}\label{alpha2exact}
        \alpha_{2,p}(w;n)  =\frac{1}{p}\sum_{j=0}^{p-1}e^{\frac{\pi i j(2n-w^2)}{p}}a(n)+\begin{cases}
        \frac{\delta_{w,0}}{p^2}a\left(\frac{n}{p^2}\right)&(p^2 \mid n)\\
        0&\text{(otherwise)}
        \end{cases},
    \end{align}
    and
    \begin{align}\label{alpha3exact}
        \alpha_{3,p}(n) = p^{21}a(n-p^2)
        +
        \begin{cases}
            \frac{1}{p^2}a\left(\frac{n-p^2}{p^2}\right)&\text{ if } (p^2\mid n)\\
            0&\text{(otherwise)}
        \end{cases}
        +
        \begin{cases}
            p^{\rho(S)-1}(r-1)a(n-p^2),& \text{ if }(p\mid n)\\
            -p^{\rho(S)-1}a(n-p^2),&\text{ if }(p\nmid n)
        \end{cases}
        .
    \end{align}
\end{theorem}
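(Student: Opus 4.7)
The plan is to derive all three formulae from the generating-function identities in Theorems~\ref{vafaw1}--\ref{vafaw3} by direct $q$-series manipulation. The two key ingredients are the Laurent expansion $\eta(q)^{-24} = \sum_{m \geq -1} a(m) q^m$ (so that formally $\eta(\zeta_d^j q^\alpha)^{-24} = \sum_m a(m)\, \zeta_d^{jm}\, q^{\alpha m}$) together with the orthogonality relation
\[
\sum_{j=0}^{d-1} \zeta_d^{jm} = \begin{cases} d & \text{if } d \mid m, \\ 0 & \text{otherwise}. \end{cases}
\]
No analytic input beyond the \emph{existence} of this expansion in Lemma~\ref{anlem} is required: the Bessel-function formula for $a(n)$ plays no role here, and the argument is purely formal.

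For \eqref{exactalpha1}, I would substitute into Theorem~\ref{vafaw1} and apply orthogonality to the sum over $j$, keeping only the terms with $d \mid m$. Writing $m = d\ell$ and $s \coloneqq r/d$, the exponent $r + r\ell/d = r + s\ell$ is automatically an integer, giving
\[
Z^{SU(r)}(q) = \sum_{d \mid r} \frac{d^2}{r^2} \sum_{\ell} a(d\ell)\, q^{s(d+\ell)}.
\]
Matching the coefficient of $q^n$ forces $\ell = (n-r)/s$, hence $s \mid (n-r)$; using $d^2/r^2 = 1/s^2$ and $d\ell = r(n-r)/s^2$, and reindexing by $s = r/d$ (which ranges over the same set of divisors of $r$), yields \eqref{exactalpha1} as a sum over $s \mid \gcd(r, n-r)$.

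For \eqref{alpha2exact}, I would expand \eqref{gen2} termwise. The $\eta(q^p)^{-24}$ piece expands as $\sum_m a(m) q^{pm}$ and contributes to the coefficient of $q^{n/p}$ only when $p^2 \mid n$, accounting for the case term; the remaining piece expands to $\frac{1}{p}\sum_m a(m)\, q^{m/p} \sum_{j=0}^{p-1} e^{\pi i j(2m - w^2)/p}$, and setting $m = n$ gives the exponential sum. For \eqref{alpha3exact} the same procedure applies to \eqref{gen3}: the prefactor $q^p$ shifts each internal exponent $m/p$ to $(m + p^2)/p$, so each of the three constituent pieces picks up $m = n - p^2$ (or $m = (n-p^2)/p^2$ for the $\eta(q^p)^{-24}$ term, which requires $p^2 \mid n$); the residual sum $\sum_{j=1}^{p-1} \zeta_p^{j(n-p^2)}$ equals $p-1$ or $-1$ according as $p \mid n$ or $p \nmid n$ (using $p \mid (n-p^2) \Leftrightarrow p \mid n$), producing the final case distinction.

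The only real obstacle is bookkeeping: keeping track of which subexpressions yield integer versus fractional powers of $q^{1/p}$, and verifying that the divisibility conditions produced by orthogonality align precisely with the case distinctions in the statement. Once this is carried out carefully, each formula drops out in a handful of lines.
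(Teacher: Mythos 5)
Your proposal is correct and follows essentially the same route as the paper: expand $\eta(q)^{-24}$ as a Laurent series, apply the roots-of-unity orthogonality relation to the sums over $j$, and match coefficients of $q^n$ (respectively $q^{n/p}$) while tracking the $q^r$ or $q^p$ prefactors and the resulting divisibility conditions. All the bookkeeping details you sketch (the reindexing $s=r/d$, the conditions $p^2\mid n$ and $p\mid n$, and the evaluation of $\sum_{j=1}^{p-1}\zeta_p^{j(n-p^2)}$) check out against the paper's argument.
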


From these exact formulae, we also deduce the asymptotic behaviours of the invariants using \eqref{aasym1}.

\begin{theorem}\label{thm: main asymp}
As $n \to \infty$ we have
\begin{align}\label{alpha1asym}
     \alpha_{1,r}(n)=\frac{2\pi}{(rn)^{\frac{13}{2}}}I_{13}\left(4\pi\sqrt{rn}\right)\left(1+O_r\left(\frac{1}{e^{2\pi\sqrt{rn}}}\right)\right).
\end{align}
For $\alpha_{2,p}(n)$, we have
\begin{equation}\label{alpha2asym}
    \alpha_{2,p}(n)= \begin{cases} \frac{2\pi}{n^{\frac{13}{2}}} I_{13}(4\pi\sqrt{n})\left(1+O_r\left(\frac{1}{e^{2\pi\sqrt{n}}}\right)\right) &\text{ if } 2p \mid 2n-w^2,\\
      \frac{2\pi \delta_{w,0} p^{\frac{9}{2}}}{n^{\frac{13}{2}}}I_{13}\left(\frac{4\pi}{p}\sqrt{n}\right)\left(1+O_p\left(\frac{1}{e^{\frac{2\pi}{p}\sqrt{n}}}\right)\right) &\text{ if } 2p \nmid 2n - w^2\ \text{and}\ p^2 \mid n, \\
    0 &\text{ otherwise}.
    \end{cases}
\end{equation}

%in the specific case where $w^2=2m$ is an even integer and $r\nmid m$, we have
%\begin{equation*}
 %   \alpha_{2,r}(r^2n)=\frac{2\pi\delta_{w,0}}{r^2 n^{\frac{13}{2}}}I_{13}\left(4\pi\sqrt{n}\right)\left(1+O_r\left(\frac{1}{e^{2\pi\sqrt{n}}}\right)\right).
%\end{equation*}
%If $w^2=2rm$, then
%\begin{align*}
  %   \alpha_{2,r}(n)= \begin{cases} \frac{2\pi}{n^{\frac{13}{2}}} I_{13}(4\pi\sqrt{n})\left(1+O_r\left(\frac{1}{e^{2\pi\sqrt{n}}}\right)\right) \qquad &{\text if }\hspace{2pt} r \mid n,\\
 %    0 \qquad &{\text otherwise}.
%     \end{cases}
%\end{align*}

Finally,
\begin{align}\label{alpha3asym}
    \alpha_{3,p}(n) =\frac{2\pi c_p(n)}{n^{\frac{13}{2}}}I_{13}\left(4\pi\sqrt{n}\right)\left(1+O\left(\frac{1}{e^{2\pi\sqrt{n}}}\right)\right),
\end{align}
where 
\begin{equation*}
    c_p(n)=
    \begin{cases}
        p^{21}+p^{\rho(S)-1}(p-1),&(p\mid n),\\
        p^{21}-p^{\rho(S)-1},&(p\nmid n).
    \end{cases}
\end{equation*}

\end{theorem}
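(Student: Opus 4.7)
The plan is to substitute the asymptotic formula for $a(n)$ from Lemma~\ref{anlem} into each of the three exact formulae in Theorem~\ref{thm: main exact} and identify, in each expression, the exponentially dominant contribution. The key point is that $a(m)$ grows like $\exp(4\pi\sqrt{m})$ (up to polynomial corrections), so in any finite $\Z$-linear combination of values $a(m_i)$ with distinct arguments, only the term with the largest $m_i$ contributes to the leading asymptotic, and every other term is suppressed by at least $\exp(-4\pi(\sqrt{m_{\max}}-\sqrt{m_{\text{next}}}))$.

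For \eqref{alpha1asym}, I view $\alpha_{1,r}(n)=\sum_{s\mid(n-r,r)}s^{-2}\,a\bigl((n-r)r/s^2\bigr)$ as a sum dominated by its $s=1$ term. Each $s\geq 2$ contribution is smaller by a factor of order $\exp(-4\pi\sqrt{r(n-r)}(1-1/s))$, and the worst such ratio, arising at $s=2$, is absorbed into the advertised error $O_r(e^{-2\pi\sqrt{rn}})$ (possibly together with the exponentially small tail from Lemma~\ref{anlem}). Applying Lemma~\ref{anlem} to $a((n-r)r)$ and simplifying the prefactor and the Bessel argument from $r(n-r)$ to $rn$ gives \eqref{alpha1asym}.

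For \eqref{alpha2asym}, the geometric sum $\frac{1}{p}\sum_{j=0}^{p-1}e^{\pi ij(2n-w^2)/p}$ in \eqref{alpha2exact} evaluates to $1$ when $2p\mid 2n-w^2$ and vanishes in the other cases arising in the trichotomy, immediately distinguishing the three regimes: the main term is $a(n)$ in the first, is the correction $\delta_{w,0}p^{-2}a(n/p^2)$ in the second (the scaling $m\mapsto m/p^2$ in Lemma~\ref{anlem} then produces the stated power of $p$), and is zero exactly in the third. For \eqref{alpha3asym}, the exact formula \eqref{alpha3exact} involves only $a(n-p^2)$ and $a((n-p^2)/p^2)$; the latter is exponentially smaller and sits inside the error, while combining the three coefficients of $a(n-p^2)$ according to the cases $p\mid n$ and $p\nmid n$ produces the factor $c_p(n)$ and yields \eqref{alpha3asym} after a final application of Lemma~\ref{anlem}.

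The main technical step will be the careful bookkeeping of the various error terms: one must verify that the polynomial corrections from replacing arguments like $(n-r)r$ or $n-p^2$ by $rn$ or $n$ inside the prefactor $m^{-13/2}$ and inside $I_{13}(4\pi\sqrt{m})$, together with the exponentially small contributions coming from the sub-dominant $a$-values and from the tail estimate in Lemma~\ref{anlem}, all fit inside the $O(e^{-2\pi\sqrt{\,\cdot\,}})$-type error stated in the theorem (reorganising the leading term if necessary). Modulo this bookkeeping, each of \eqref{alpha1asym}, \eqref{alpha2asym} and \eqref{alpha3asym} is a direct consequence of combining Theorem~\ref{thm: main exact} with Lemma~\ref{anlem} applied to the single dominant $a$-value.
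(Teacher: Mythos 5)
Your approach is exactly the paper's: substitute the asymptotic \eqref{aasym1} for $a(\cdot)$ into the exact formulae of Theorem \ref{thm: main exact} and keep only the exponentially dominant $a$-value. The identification of the dominant term is correct in all three cases, as is your evaluation of the geometric sum for $\alpha_{2,p}$ (implicitly using, as the paper does, that $w^2$ is even).

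The step you defer --- the ``bookkeeping'' of replacing $(n-r)r$ by $rn$ and $n-p^2$ by $n$ --- is, however, exactly where the argument fails to close as stated, and your instinct to flag it is correct: these replacements do \emph{not} fit inside an exponentially small error. Since $\sqrt{rn}-\sqrt{r(n-r)}=r^{3/2}/(\sqrt{n}+\sqrt{n-r})\sim r^{3/2}/(2\sqrt{n})$, the Bessel asymptotic \eqref{I-bessel asymp} gives
\begin{equation*}
\frac{I_{13}\left(4\pi\sqrt{r(n-r)}\right)}{I_{13}\left(4\pi\sqrt{rn}\right)} = 1-\frac{2\pi r^{3/2}}{\sqrt{n}}+O_r\left(\frac{1}{n}\right),
\end{equation*}
so writing the main term with argument $rn$ forces a relative error of order $n^{-1/2}$, not $O_r(e^{-2\pi\sqrt{rn}})$; the same happens for $\alpha_{3,p}$ with $n-p^2$ versus $n$. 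To retain a genuinely exponential error one must leave the main term as $\frac{2\pi}{((n-r)r)^{13/2}}I_{13}(4\pi\sqrt{(n-r)r})$ --- your parenthetical ``reorganising the leading term if necessary'' is precisely the required fix. To be fair, this is a defect of the theorem's statement and of the paper's own proof (which asserts $a((n-r)r)\sim a(rn)$ with exponentially small error), not of your method. One further arithmetic point: in the second case of \eqref{alpha2asym}, carrying out your scaling $m\mapsto n/p^2$ honestly yields the constant $\frac{1}{p^2}\cdot (p^2)^{13/2}=p^{11}$, not the printed $p^{9/2}$, so your computation will again disagree with the stated formula; you should record $p^{11}$ rather than reverse-engineer the paper's constant.
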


The case of $\alpha_{2,p}$ is a little complicated in the statement of the theorem, which is explained by the fact that the sum over $j$ can vanish or not as $n$ varies, yielding a non-uniform asymptotic.

\subsection{Tur\'{a}n inequalities} The Tur\'{a}n inequalities for functions in the real entire Laguerre--P\'{o}lya class are intricately linked to the Riemann hypothesis \cite{Dimitrov, Sz} and have seen renewed interest in recent years. In fact, the Riemann hypothesis is true if and only if the Riemann Xi function lies in the Laguerre--P\'{o}lya class, and a necessary condition is that the Maclaurin coefficients satisfy Tur\'{a}n inequalities of all orders \cite{Dimitrov,SchurPolya}.

The second order Tur\'{a}n inequality for a sequence $\{a_n\}_{n \geq 0}$ is known as log-concavity, and is satisfied if
\begin{align*}
    a_n^2 \geq a_{n-1}a_{n+1}
\end{align*}
for all $n \geq 1$. Log-concavity for modular objects is well-studied in the literature, for example \cite{BJMR,CCM,CraigPun,DeSalvo,Males}.
The higher-order Tur\'{a}n inequalities are linked to the Jensen polynomial associated to a sequence of real numbers $\{\alpha(n)\}$, given by
\begin{align*}
    J_{\alpha}^{d,n}(X) \coloneqq \sum_{j=0}^{d-1} \binom{d}{j} \alpha(n+j) X^j.
\end{align*}
Consider a polynomial with real coefficients, $f(x) = x^n + a_{n-1}x^{n-1} + \dots + a_1x+a_0$. Then a classical result of Hermite states that the polynomial $f$ is hyperbolic if and only if the Hankel matrix of $f$ is positive definite.  In turn, this gives a set of inequalities on the coefficients of $f$ as all $k\times k$ minors of the Hankel matrix must be positive definite (see e.g. the introduction of \cite{CraigPun} for an explicit description). When applied to $J_{\alpha}^{d,n}$, Hermite's theorem gives a set of inequalities associated to the sequence $\{\alpha(n)\}_{n \geq 0}$ and are known as the order $k$ Tur\'{a}n inequalities\footnote{One also allows the equality case.}. Thus the higher order (strict) Tur\'{a}n inequalities for $1\leq k \leq d$ are satisfied for $\{\alpha(n+j)\}_{n\geq 0}$ if and only if $J_{\alpha}^{d,n}$ is hyperbolic. 

In \cite{GORZ}, the authors proved new results on the (asymptotic) hyperbolicity of Jensen polynomials attached to a vast array of sequences, including those that are coefficients of weakly holomorphic modular forms. Following this, several other recent papers have made progress in this area, for example \cite{LW}, \cite{Xi} and \cite{CraigPun}.

Our result in this direction readily follows from results of \cite{GORZ} and we obtain the following which, to the best of the authors' knowledge, is the first description of higher-order Tur\'{a}n inequalities for Vafa--Witten invariants. The case for $\alpha_{2,p}$ is slightly more complicated than the others, as it depends inherently on the choice of $w$.
\begin{theorem}\label{Thm: main asymp Turan}
     For fixed $r$ and $p$, as $n \to \infty$ the Vafa--Witten invariants $\alpha_{1,r}(n)$ and $\alpha_{3,p}(pn)$ each satisfy all order $k$ Tur\'{a}n inequalities. If $\gcd(2p,w^2) = 1$ then $\alpha_{2,p}(w;n)$ satisfies all order $k$ Tur\'{a}n inequalities. If $w^2 =2pm$ for some $m \in \mathbb{Z}$, then $\alpha_{2,r}(w;pn)$ satisfies all order $k$ Tur\'{a}n inequalities. Moreover, $\alpha_{2,p}(w;p^2n)$ satisfies all order $k$ Tur\'{a}n inequalities.
\end{theorem}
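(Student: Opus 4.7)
The plan is to invoke the asymptotic-hyperbolicity theorem of Griffin--Ono--Rolen--Zagier \cite{GORZ}: a sequence $\{a(n)\}$ of positive real numbers admitting an asymptotic expansion of the shape $a(n)=Cn^{-\beta}e^{\alpha\sqrt{n}}(1+O(n^{-\varepsilon}))$ with positive $C,\alpha,\varepsilon$ and suitably regular error has Jensen polynomials $J_a^{k,n}$ that are hyperbolic for all sufficiently large $n$, and hence, by Hermite's criterion, satisfies every order $k$ Tur\'an inequality for $n\gg 0$. The first step is to convert the Bessel asymptotics of Theorem \ref{thm: main asymp} into this exponential form using $I_{13}(x)=\frac{e^{x}}{\sqrt{2\pi x}}(1+O(1/x))$, producing a leading-order shape $Cn^{-27/4}e^{\alpha\sqrt{n}}$ with explicit positive $C$ and $\alpha$ in each case.

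Next I would verify the GORZ hypotheses case by case. For $\alpha_{1,r}(n)$, the asymptotic \eqref{alpha1asym} delivers the required form directly with $\alpha=4\pi\sqrt{r}$. For $\alpha_{3,p}(pn)$, the substitution $n\mapsto pn$ always lands in the $p\mid n$ branch of $c_p$ in \eqref{alpha3asym}, so that $c_p=p^{21}+p^{\rho(S)-1}(p-1)$ is a positive constant independent of $n$, and GORZ applies with $\alpha=4\pi\sqrt{p}$. The three assertions about $\alpha_{2,p}(w;n)$ will follow by matching each arithmetic restriction to the correct branch of the piecewise asymptotic \eqref{alpha2asym}: if $w^2=2pm$ then $2(pn)-w^2=2p(n-m)$ is divisible by $2p$, pinning $\alpha_{2,p}(w;pn)$ to the first branch; the substitution $n\mapsto p^2n$ forces $p^{2}\mid p^{2}n$, placing $\alpha_{2,p}(w;p^2n)$ in the first branch if $2p\mid w^2$ and otherwise in the second (which then needs $\delta_{w,0}\neq 0$ to produce a non-vanishing leading term). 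The regime $\gcd(2p,w^2)=1$ rules out the first branch (since $w^2$ is odd forces $2p\nmid 2n-w^2$) and forces $\delta_{w,0}=0$ in the second, so it must be analysed directly from the exact formula \eqref{alpha2exact}.

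The hard part will be precisely this case analysis for $\alpha_{2,p}(w;n)$. Outside the progressions along which \eqref{alpha2asym} collapses to a single branch, one must return to \eqref{alpha2exact} and examine the geometric sum $\tfrac{1}{p}\sum_{j=0}^{p-1}e^{\pi ij(2n-w^2)/p}$ as a function of $n\bmod 2p$, checking that it stabilises along the intended progression to a positive real scalar, so that the leading behaviour inherited from $a(n)$ via Lemma \ref{anlem} matches the input format of \cite{GORZ}. Once asymptotic positivity and the correct leading exponent $\alpha$ are secured in each regime, the appeal to \cite{GORZ} and the Hermite criterion is routine, and the remaining statements follow as direct corollaries of the relevant asymptotic formula.
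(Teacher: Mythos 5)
Your proposal follows essentially the same route as the paper: convert the $I$-Bessel asymptotics of Theorem \ref{thm: main asymp} into exponential form via $I_{13}(x)=\frac{e^x}{\sqrt{2\pi x}}(1+O(x^{-1}))$, verify the hypotheses of the Griffin--Ono--Rolen--Zagier hyperbolicity criterion (the paper does this by explicitly expanding $\log(\alpha_{1,r}(n+j)/\alpha_{1,r}(n))$ to identify $A(n)$ and $\delta(n)$ in Theorem \ref{Thm: Gorz}), and conclude via Hermite's criterion, treating the other invariants analogously. Your branch-by-branch analysis of $\alpha_{2,p}(w;n)$ via the geometric sum in \eqref{alpha2exact} is, if anything, more explicit than the paper's remark that ``one accounts for the variations depending on $w^2$,'' so there is no gap relative to what the paper itself supplies.
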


Since the Jensen polynomial attached to the each of the sequences $\alpha_{1,r}(n)$, $\alpha_{2,p}(pn)$ and $\alpha_{3,p}(pn)$ (replacing $\alpha_{2,p}(n)$ by $\alpha_{2,p}(pn)$ or $\alpha_{2,p}(p^2n)$ as necessary) for fixed $r$ and $p$ are all eventually hyperbolic, in Corollary \ref{Cor: LP} we obtain several examples of generating functions which can be regarded as new examples of functions in the shifted Laguerre--P\'{o}lya class, as in recent work of Wagner \cite{Wagner}. To the best of the authors' knowledge, such functions have so far only previously been studied in the context of number theory and combinatorics. That our functions lie in the shifted Laguerre--P\'{o}lya class is a direct consequence of the generating function of $\alpha_{j,r}(n)$ arising from specialisations of (twists of) Dedekind $\eta$-functions. A natural question to pose is whether further examples of topological invariants define objects in the same class of functions, and therefore what are the implications of general properties of this class of function to the topological spaces at hand, motivating further investigation of the situation in \cite{Wagner}.

\section{Preliminaries}\label{sectprelim}
In this section we collect several preliminary results required for the rest of the paper.
\subsection{An exact formula of Zuckerman}\label{Sec: Zuc}
We require a powerful result of Zuckerman \cite{Zuckerman}, that builds on work of Rademacher \cite{RademacherExact}. Zuckerman obtained exact formulae for Fourier coefficients of weakly holomorphic modular forms of arbitrary non-positive weight on finite index subgroups of $\SL_2(\Z)$ in terms of the cusps of the underlying subgroup and the principal parts of the form at each cusp. Let $\tau \in \mathbb{H}$ and let $F$ be a weakly holomorphic modular form of weight $\kappa \leq 0$ with transformation law
$$F(\gamma \tau) = \chi(\gamma) (-i(c \tau + d))^{\kappa} F(\tau),$$
for all $\gamma = \left( \begin{smallmatrix} a & b \\ c & d \end{smallmatrix} \right)$ in some finite index subgroup of $\SL_2(\Z)$. The transformation law can be expressed equivalently in terms of the cusp $\frac{h}{k} \in\Q$. Let $h'$ be defined by the congruence $hh' \equiv -1 \pmod k$. Letting $\tau = \frac{h'}{k} + \frac{i}{kz}$ and choosing $\gamma=\gamma_{h,k}  := \left( \begin{smallmatrix} h & \beta \\ k & -h' \end{smallmatrix} \right) \in \textnormal{SL}_2(\mathbb{Z})$, we obtain the equivalent transformation law
\begin{equation}\label{maintranseq}
    F\lp \frac{h}{k}+\frac{iz}{k} \rp = \chi(\gamma_{h,k}) z^{-\kappa}   F\lp \frac{h'}{k}+\frac{i}{kz} \rp.
\end{equation}

Let $F$ have the Fourier expansion at $i\infty$ given by
\begin{align*}
	F(\tau) = \sum_{n \gg -\infty} a(n) q^{n + \alpha}
\end{align*}
and Fourier expansions at each rational number $0 \leq \frac{h}{k} < 1$ given by
\begin{align*}
	F|_{\kappa}\gamma_{h,k}(\tau) = \sum_{n \gg -\infty} a_{h,k}(n) q^{\frac{n + \alpha_{h,k}}{c_{k}}}.
\end{align*}

In this framework, Zuckerman's result \cite[Theorem 1]{Zuckerman} (see also \cite{BCMO}) reads as follows.
\begin{theorem}\label{Thm: Zuckerman}
	Assume the notation and hypotheses above. If $n + \alpha > 0,$ then
	\begin{align*}
		&a(n) =  2\pi (n+\alpha)^{\frac{\kappa-1}{2}} \sum_{k=1}^\infty \dfrac{1}{k} \sum_{\substack{0 \leq h < k \\ \gcd(h,k) = 1}}\chi(\gamma_{h,k}) e^{- \frac{2\pi i (n+\alpha) h}{k}} 
		\\ 
		&\ \times \sum_{m+\alpha_{h,k} \leq 0} a_{h,k}(m) e^{ \frac{2\pi i}{k c_{k}} (m + \alpha_{h,k}) h' } \left( \dfrac{\lvert m +\alpha_{h,k} \rvert}{c_{k}} \right)^{ \frac{1 - \kappa}{2}} I_{-\kappa+1}\left( \dfrac{4\pi}{k} \sqrt{\dfrac{(n + \alpha)\lvert m +\alpha_{h,k} \rvert}{c_{k}}} \right),
	\end{align*}
	where $I_{-\kappa+1}$ is the usual $I$-Bessel function.
\end{theorem}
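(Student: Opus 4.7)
The plan is to apply Rademacher's circle method, in the form refined by Zuckerman to handle weakly holomorphic modular forms of arbitrary non-positive weight whose principal parts at cusps other than $i\infty$ may contribute. The starting point is the Cauchy-type representation
\begin{equation*}
a(n) = \int_{iy_0}^{1+iy_0} F(\tau)\, e^{-2\pi i (n+\alpha)\tau}\, d\tau,
\end{equation*}
valid for any $y_0 > 0$. Taking $y_0 = 1/N^2$ and deforming the segment onto the arcs of Ford circles of order $N$, one writes the integral as a sum over Farey fractions $h/k$ with $1 \le k \le N$, $0 \le h < k$, $\gcd(h,k) = 1$, of integrals along short arcs $\xi_{h,k}$. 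Parametrizing $\tau = h/k + iz/k$ on $\xi_{h,k}$ and applying the transformation law \eqref{maintranseq} replaces $F(\tau)$ by $\chi(\gamma_{h,k})\, z^{-\kappa}\, F(h'/k + i/(kz))$, so that the new argument is near $i\infty$ and the Fourier expansion at the cusp $h/k$ converges rapidly.

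Substituting this expansion and separating it into its principal part (terms with $m + \alpha_{h,k} \leq 0$) and its holomorphic part, the principal part contributes, for each such $m$, an integral of the form
\begin{equation*}
\int_{\xi_{h,k}} z^{-\kappa} \exp\!\lp \frac{2\pi(n+\alpha) z}{k} + \frac{2\pi|m+\alpha_{h,k}|}{k c_k z}\rp dz
\end{equation*}
multiplied by an explicit prefactor involving $\chi(\gamma_{h,k})$ and exponentials in $h, h'$. Letting $N \to \infty$ collapses each arc to a Hankel contour around $z=0$, and after a change of variable $w = 2\pi|m+\alpha_{h,k}|/(k c_k z)$ the limiting integral matches the classical Hankel representation
\begin{equation*}
I_\nu(x) = \frac{(x/2)^\nu}{2\pi i} \int_H t^{-\nu-1} e^{t + x^2/(4t)}\, dt,
\end{equation*}
with $\nu = -\kappa+1$ and $x = (4\pi/k)\sqrt{(n+\alpha)|m+\alpha_{h,k}|/c_k}$. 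Collecting the prefactors reproduces precisely the $2\pi (n+\alpha)^{(\kappa-1)/2} \lp |m+\alpha_{h,k}|/c_k\rp^{(1-\kappa)/2}$ weighting and the Bessel function appearing in the statement.

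The main obstacle, and where the hypothesis $\kappa \le 0$ is crucial, is to show that the contribution from the holomorphic parts of the cusp expansions vanishes as $N \to \infty$. On each Ford arc one bounds $|z|^{-\kappa}$ (which stays bounded since $\kappa \le 0$) against the exponential decay of the holomorphic part evaluated at $i/(kz)$, and summing over $h$ and $k \le N$ with the standard $O(1/(kN))$ arc-length estimate yields an error of size $o(1)$. Once this error is dispensed with, convergence of the resulting double series in the statement is immediate: each Bessel term is bounded by an exponential whose argument shrinks with $k$, the factor $1/k$ aids absolute convergence, and $|\chi(\gamma_{h,k})| = 1$. For positive weight, this error analysis becomes much more delicate and requires the full Rademacher refinement, but here the non-positive weight hypothesis makes the bookkeeping essentially routine.
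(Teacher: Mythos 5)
The paper offers no proof of this statement: it is imported verbatim as Theorem 1 of \cite{Zuckerman} (stated there in the classical language of forms of positive dimension), so there is nothing internal to compare against. Your sketch is a faithful outline of Zuckerman's actual argument --- the Cauchy integral over a horizontal segment, Farey dissection onto Ford arcs, the transformation law \eqref{maintranseq} on each arc, the principal-part terms producing $I_{-\kappa+1}$ via Schl\"afli's loop integral, and a vanishing contribution from the holomorphic parts --- so as a reconstruction of the cited proof it takes the intended route. The one claim that oversells itself is that convergence of the final double series is ``immediate'': bounding the inner sum trivially by $\phi(k)\le k$ against the factor $1/k$ and $I_{1-\kappa}(c/k)=O(k^{\kappa-1})$ gives a $k$-th term of size $O(k^{\kappa-1})$, which is absolutely summable for $\kappa<0$ but fails precisely at the boundary weight $\kappa=0$ that the theorem permits; there one needs cancellation in the Kloosterman-type sums (as in the $j$-function case), not just the crude bound. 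For the application in this paper ($\kappa=-12$, $\nu=13$) your convergence remark is correct, and the remaining steps (uniform minor-arc estimates on the Ford arcs, completing the arc to a full loop before invoking the Hankel representation) are asserted rather than carried out, but they are the standard ones.
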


\subsection{Transformation laws of $q$-Pochhamer symbols}
For coprime $h,k\in\N$ let
\begin{align*}
\omega_{h,k}:=\exp(\pi i \cdot s(h,k)),
\end{align*}
with the Dedekind sum
\begin{align*}
s(h,k):=\sum_{\mu \pmod k} \left(\left(\frac{\mu}{k}\right)\right)\left(\left(\frac{h\mu}{k}\right)\right).
\end{align*}
Here,
\begin{align*}
((x)):=\begin{cases} x-\lfloor x \rfloor-\frac{1}{2} & \text{if} \ x\in \mathbb R \setminus \mathbb Z, \\ 0 & \text{if} \ x \in \mathbb Z. \end{cases}
\end{align*}

Let $(a;q)_\infty$ be the usual $q$-Pochhammer symbol and $q \coloneqq e^{\frac{2\pi i}{k} (h+iz)}$ for $z\in \mathbb{C}$ with $\re{z}>0$. The classical modular transformation law for the Dedekind $\eta$-function (see e.g. \cite[Theorem 3.4]{apostol}) is
\begin{equation*}
    \eta\left(\frac{a\tau+b}{c\tau+d}\right)=\epsilon(a,b,c,d)(-i(c\tau+d))^{\frac{1}{2}}\eta(\tau),
\end{equation*}
where
\begin{equation*}
    \epsilon(a,b,c,d)=e^{i\pi\left(\frac{a+d}{12c}+s(-d,c)\right)}
\end{equation*}
if $c>0$. Thus, if $(a,b,c,d)=(h,\beta,k,-h')$, we get
\begin{equation*}
    \eta\left(\frac{h'}{k} + \frac{i}{kz} \right)=e^{i\pi\left(\frac{h-h'}{12k}+s(h',k)\right)}z^{-\frac{1}{2}}\eta\left(\frac{h}{k} +\frac{iz}{k} \right)
\end{equation*}
so that
\begin{equation}\label{etatrans}
    \frac{1}{ \eta\left(\frac{h'}{k} + \frac{i}{kz} \right)}=e^{i\pi\left(\frac{h'-h}{12k}+s(h,k)\right)}z^{\frac{1}{2}}\frac{1}{\eta\left(\frac{h}{k} +\frac{iz}{k} \right)}.
\end{equation}
Here we have used a standard result regarding the Dedekind sum (\cite[Theorem 3.6b]{apostol}), which implies that $s(h,k)=-s(h',k)$.

\subsection{Hyperbolicity of Jensen polynomials}
In \cite{GORZ} it was shown that for certain classes of functions, the Jensen polynomials of the coefficients tend to the order $d$ Hermite polynomials $H_d(X)$, in turn implying asymptotic hyperbolicity. 

\begin{theorem}[Theorem 3 of \cite{GORZ}]\label{Thm: Gorz}
Let $\{ \alpha(n)\}$, $\{A(n)\}$, and $\{\delta(n)\}$ be three sequences of positive real numbers with $\delta(n)$ tending to $0$ and satisfying
\begin{align*}
\log\left( \frac{\alpha(n+j)}{\alpha(n)} \right) = A(n)j - \delta(n)^2j^2+\sum_{i=3}^{d} g_i(n)j^i +o(\delta(n)^d)
\end{align*}
as $n \to \infty$, where each $g_i(n) = o(\delta(n)^i)$ for each $3\leq i \leq d$. Then the renormalized Jensen polynomials 
\begin{align*}
    \widehat{J}_{\alpha}^{d,n}(X) \coloneqq \frac{\delta(n)^{-d}}{\alpha(n)} J_{\alpha}^{d,n}\left( \frac{\delta(n)X-1}{\exp(A(n))}\right)
\end{align*}
satisfy
\begin{align*}
    \lim_{n \to \infty} \widehat{J}_{\alpha}^{d,n}(X) = H_d(X),
\end{align*}
uniformly for $X$ in any compact subset of $\mathbb{R}$. Moreover, this implies that the Jensen polynomials $J_{\alpha}^{d,n}$ are each hyperbolic for all but finitely many $n$.
\end{theorem}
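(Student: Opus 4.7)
The plan is to prove the theorem by expanding the renormalized Jensen polynomial to order $\delta(n)^d$ and identifying the limit directly. Substituting the hypothesis $\alpha(n+j) = \alpha(n)\exp\bigl(A(n)j - \delta(n)^2 j^2 + \sum_{i=3}^d g_i(n) j^i + o(\delta(n)^d)\bigr)$ into $J_\alpha^{d,n}$ and evaluating at $Y=(\delta(n)X-1)/\exp(A(n))$, the factor $\alpha(n)\exp(A(n)j)$ cancels exactly by design, yielding
\begin{align*}
    \widehat{J}_\alpha^{d,n}(X) = \delta(n)^{-d}\sum_{j=0}^{d}\binom{d}{j}(\delta(n)X-1)^j\exp\!\left(-\delta(n)^2 j^2 + \sum_{i=3}^d g_i(n)j^i + o(\delta(n)^d)\right).
\end{align*}
The strategy is to Taylor expand both the exponential and the binomial, group terms by total power of $\delta(n)$, and show: (i) all contributions of order $\delta(n)^s$ with $s<d$ cancel; (ii) the surviving $\delta(n)^d$ contribution is exactly $H_d(X)$; and (iii) every error from the $g_i(n)j^i$ terms and the inner $o(\delta(n)^d)$ is $o(1)$ after multiplication by $\delta(n)^{-d}$, uniformly in $X$ on any compact set.

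Both (i) and (ii) reduce to a single combinatorial identity. After expansion, the coefficient of $X^\ell\delta(n)^{\ell+2k}$ in the unrescaled polynomial is a constant multiple of $\sum_j \binom{d}{j}\binom{j}{\ell}(-1)^{j-\ell} j^{2k}$. Applying $\binom{d}{j}\binom{j}{\ell}=\binom{d}{\ell}\binom{d-\ell}{j-\ell}$ and substituting $m=j-\ell$ converts this into a finite difference $\sum_m\binom{d-\ell}{m}(-1)^m(m+\ell)^{2k}$, which vanishes for $2k<d-\ell$ and equals $(-1)^{d-\ell}(d-\ell)!$ for $2k=d-\ell$. Collecting the surviving pairs with $\ell+2k=d$ yields precisely $\sum_k \tfrac{(-1)^k d!}{k!(d-2k)!}X^{d-2k}=H_d(X)$ in the normalization of GORZ. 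For (iii), the hypothesis $g_i(n)=o(\delta(n)^i)$ guarantees that every monomial containing a $g_i$-factor is strictly smaller than the corresponding pure $\delta(n)^i$ contribution; iterating this argument over all cross-products in the exponential series shows every such error stays below $\delta(n)^d$, and uniformity over compact $X$ is automatic since all expressions are polynomial in $j$ and $X$ of bounded degree.

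Once uniform convergence $\widehat{J}_\alpha^{d,n}\to H_d$ on compacta is established, hyperbolicity of $J_\alpha^{d,n}$ for large $n$ follows from continuity of zeros. Since $H_d$ has $d$ simple real zeros, enclosing each in a small disjoint disc in $\mathbb{C}$ and applying Hurwitz's theorem (valid because polynomials of fixed degree converging uniformly on a nondegenerate real interval converge uniformly on compacta in $\mathbb{C}$) forces $\widehat{J}_\alpha^{d,n}$ to have exactly one zero in each disc for all sufficiently large $n$. These zeros are necessarily real because non-real zeros of polynomials with real coefficients come in complex conjugate pairs and each disc contains only one root. Finally, $J_\alpha^{d,n}$ and $\widehat{J}_\alpha^{d,n}$ differ only by an affine reparametrisation in the argument, which preserves hyperbolicity.

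The main obstacle will be the careful bookkeeping of error terms in the exponential expansion: for general $d$, the collection of cross-products involving the $g_i(n)j^i$ pieces and the inner $o(\delta(n)^d)$ is combinatorially rich, and each must be shown to contribute $o(\delta(n)^d)$ uniformly in $j\in\{0,\dots,d\}$ and in $X$ on a compact set. The combinatorial identity itself is classical; the challenge is organising the entire asymptotic expansion cleanly and systematically so that nothing of order $\delta(n)^d$ is accidentally discarded or double-counted.
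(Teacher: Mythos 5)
This statement is quoted verbatim from Griffin--Ono--Rolen--Zagier and the paper offers no proof of it --- it is imported as a black box --- so the only meaningful comparison is with the argument in \cite{GORZ} itself. Your reconstruction is essentially correct and is, at bottom, the same computation: substitute the asymptotic hypothesis, observe that the evaluation point $(\delta(n)X-1)/\exp(A(n))$ is designed to cancel $\alpha(n)e^{A(n)j}$, and identify the surviving order-$\delta(n)^d$ piece with $H_d(X)$. The difference is organisational. You expand coefficient by coefficient and invoke the finite-difference identity $\sum_m\binom{N}{m}(-1)^mP(m)=0$ for $\deg P<N$ (with value $(-1)^NN!$ at the top degree), which is exactly the right identity and does yield the Hermite coefficients $(-1)^kd!/(k!(d-2k)!)$. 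GORZ instead sum over $d$ against $t^d/d!$ and show the exponential generating function of the $\widehat{J}_\alpha^{d,n}$ converges to $e^{-t^2+Xt}=\sum_dH_d(X)t^d/d!$; this packages all of your ``combinatorially rich'' bookkeeping of cross-terms into a single limit of exponentials and is the cleaner way to execute the plan you describe. One imprecision worth flagging: in your step (iii) you attribute the harmlessness of the $g_i(n)j^i$ cross-terms purely to the smallness $g_i(n)=o(\delta(n)^i)$, but a term of total degree $M$ in $j$ with $M+\ell<d$ is \emph{not} a priori $o(\delta(n)^d)$ --- it is killed by the same finite-difference cancellation as in your step (i), and only the terms with $M+\ell\geq d$ are handled by smallness; both mechanisms are needed. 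Finally, note that the paper's displayed definition of $J_\alpha^{d,n}$ sums to $j=d-1$, which is a typo for $j=d$ (the degree-$d$ polynomial of \cite{GORZ}); you correctly use the latter. The Hurwitz/continuity-of-zeros deduction of hyperbolicity at the end is standard and correct.
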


\section{Vafa--Witten invariants}
The goal of this section is to obtain the claimed exact formulae and asymptotics for $\alpha_{1,r}(n)$, $\alpha_{2,p}(w;n)$ and $\alpha_{3,p}(n)$. We begin with a short subsection computing an exact formula for the coefficients $a(n)$.
\subsection{Formula for the coefficients of $\eta(q)^{-24}$}
We begin by proving Lemma \ref{anlem}, which gives an exact formula for the $n^{\text{th}}$ coefficient of $\eta(q)^{-24}=\sum_{n\in\Z}a(n)q^n$. Note that in \cite{IJT}, a longer and more general argument, not using Zuckerman's formula, is used to compute exact formulae for fractional powers of $\eta$. 

\begin{proof}[Proof of Lemma \ref{anlem}]
By the definition \eqref{etadef} of $\eta(q)$, we have $a(n)=0$ for $n<-1$, $a(-1)=1$ and $a(0)=24$. For $n>0$ we apply Zuckerman's formula. So, let $F(q)=\eta(q)^{-24}$. For the setup of Zuckerman's formula, we write
\begin{equation*}
    \tau=\frac{h'}{k}+\frac{i}{kz},\quad q=e^{2\pi i\tau},\quad\text{and}\quad q_1=e^{2\pi i(\gamma_{h,k}(\tau))},
\end{equation*}
where $\gamma_{h,k}= \left( \begin{smallmatrix} h & \beta \\ k & -h' \end{smallmatrix} \right)$ as in Section \ref{Sec: Zuc}.

Now, using \eqref{etatrans}, we have
\begin{equation}\label{etatrans24}
    F(q_1)=e^{24\pi i\left(\frac{h'-h}{12k}+s(h,k)\right)}z^{12}F(q).
\end{equation}

In terms of Zuckerman's formula, $\alpha=0$ and from \eqref{etatrans24} we see that $\kappa=-12$ and
\begin{equation*}
    \chi(\gamma_{h,k})=\omega_{h,k}^{24} e^{2\pi i\left(\frac{h'-h}{k}\right)}.
\end{equation*}
Next, by \eqref{etatrans24},
\begin{equation*}
    F|_{\kappa}\gamma_{h,k}(\tau)=F(\tau)=\sum_{n\in\Z}a(n)q^n.    
\end{equation*}
Thus, we can set $c_k=1$, $\alpha_{h,k}=\alpha=0$ and $a_{h,k}(n)=a(n)$. Moreover, the only term in the principal part of $F(\tau)$ is $q^{-1}$ with coefficient $a(-1)=1$. Putting all of this information into Theorem \ref{Thm: Zuckerman} gives, for $n>0$,
\begin{align}
    a(n)&=\frac{2\pi}{n^{\frac{13}{2}}}\sum_{k=1}^\infty\frac{1}{k}\sum_{\substack{0\leq h<k\\\gcd(h,k)=1}}\omega_{h,k}^{24}e^{2\pi i\left(\frac{h'-h}{k}\right)}e^{-\frac{2\pi inh}{k}}e^{-\frac{2\pi i h'}{k}}I_{13}\left(\frac{4\pi}{k}\sqrt{n}\right)\notag\\
    &=\frac{2\pi}{n^{\frac{13}{2}}}\sum_{k=1}^\infty\frac{1}{k}\sum_{\substack{0\leq h<k\\\gcd(h,k)=1}}\omega_{h,k}^{24}e^{-\frac{2\pi i(n+1)h}{k}}I_{13}\left(\frac{4\pi}{k}\sqrt{n}\right),\label{exactan}
\end{align} 
as desired.

For the asymptotic expression, we note that for any $\nu\geq 0$ (see e.g. \cite[(4.12.7)]{andrews})
\begin{align}\label{I-bessel asymp}
    I_\nu(x) = \frac{e^{x}}{\sqrt{2\pi x}} \left(1+O\left(x^{-1}\right)\right)
\end{align}
meaning the $k=1$ term dominates in \eqref{exactan}.
\end{proof}

We are now in a position to compute the exact formulae and asymptotics for $\alpha_{1,r}(n)$, $\alpha_{2,p}(w;n)$, and $\alpha_{3,p}(n)$ (Theorems \ref{thm: main exact} and \ref{thm: main asymp}).

\subsection{Exact formula for $\alpha_{1,r}(n)$}\label{alpha1sect}
In this subsection we prove the exact formula \eqref{exactalpha1} for $\alpha_{1,r}(n)$ and the corresponding asymptotic \eqref{alpha1asym}. So, recall that
\begin{align}\label{zsureq}
    Z^{SU(r)} (q) = \sum_{n \in \Z} \alpha_{1,r}(n) q^n = q^r\sum_{d \mid r} \frac{d}{r^2} \sum_{j=0}^{d-1} \eta\left(\zeta_d^j q^{\frac{r}{d^2}}\right)^{-24}.
\end{align}
Using the substitution $q\mapsto\zeta_d^jq^{\frac{r}{d^2}}$ we see that for any $k\in\Z$, the $k\frac{r}{d^2}$-th coefficient of 
\begin{equation*}
    \eta\left(\zeta_d^j q^{\frac{r}{d^2}}\right)^{-24}
\end{equation*}
is equal to $\zeta_d^{jk}a(k)$. Therefore, the $k\frac{r}{d^2}$-th coefficient of 
\begin{equation}\label{sumetaeq}
    \frac{d}{r^2}\sum_{j=0}^{d-1}\eta\left(\zeta_d^j q^{\frac{r}{d^2}}\right)^{-24}
\end{equation}
is $\frac{d^2}{r^2}a(k)$ if $d\mid k$ and $0$ if $d\nmid k$. In particular, the only non-zero coefficients of $q$ in \eqref{sumetaeq} are the multiples of $r/d$. Hence, the $n^{th}$ coefficient of
\begin{equation*}
    \sum_{d\mid r}\frac{d}{r^2}\sum_{j=0}^{d-1} \eta\left(\zeta_d^j q^{\frac{r}{d^2}}\right)^{-24}
\end{equation*}
is given by
\begin{equation*}
    \sum_{d\mid r,\:\frac{r}{d}\mid n}\frac{d^2}{r^2}a\left(\frac{nd^2}{r}\right).
\end{equation*}
Incorporating the factor of $q^r$ in \eqref{zsureq} and letting $s=r/d$ then gives
\begin{equation}\label{alpha1rconc}
    \alpha_{1,r}(n)=\sum_{s\mid\left(n-r,r\right)}\frac{1}{s^2}a\left(\frac{(n-r)r}{s^2}\right),
\end{equation}
as required. Now, from the asymptotic expression \eqref{aasym1} for $a(n)$ we see that the $s=1$ term dominates in \eqref{alpha1rconc} so that
\begin{equation*}
    \alpha_{1,r}(n)\sim a((n-r)r)\sim a(rn)
\end{equation*}
with each of the error terms in these asymptotics less than $O_r\left(\frac{I_{13}\left(4\pi\sqrt{n}\right)}{e^{2\pi\sqrt{rn}}n^{\frac{13}{2}}}\right)$. This gives \eqref{alpha1asym}.

\subsection{Exact formula for $\alpha_{2,p}(w;n)$}
We now turn to the second Vafa--Witten invariant. To begin with, let $\alpha_{2,p}(w;n)=\alpha_{2,p,1}(w;n)+\alpha_{2,p,2}(w;n)$ where
\begin{align*}
    \sum_{n \in \mathbb{Z}} \alpha_{2,p,1}(w;n) q^{\frac{n}{p}}&=\frac{\delta_{w,0}}{p^2}\eta(q^p)^{-24},\\
    \sum_{n \in \mathbb{Z}} \alpha_{2,p,2}(w;n)q^{\frac{n}{p}}&=\frac{1}{p}\sum_{j=0}^{p-1} e^{-\frac{\pi i j w^2}{p}}\eta(\zeta_p^jq^{\frac{1}{p}})^{-24}.
\end{align*}
Now, the $\frac{n}{p}$th coefficient of $\eta(q^p)^{-24}$ is given by $a(n/p^2)$ if $n\mid p^2$ and $0$ otherwise. Thus,
\begin{equation}\label{w1eq}
    \alpha_{2,p,1}(w;n)=
    \begin{cases}
        \frac{\delta_{w,0}}{p^2}a\left(\frac{n}{p^2}\right)&(n\mid p^2),\\
            0&\text{(otherwise)}.
    \end{cases}
\end{equation}
Then, the $\frac{n}{p}$th coefficient of $\eta(\zeta_p^jq^{\frac{1}{p}})^{-24}$ is given by $\zeta_p^{nj}a(n)$ so that
\begin{equation}\label{w2eq}
    \alpha_{2,p,2}(w;n)=\frac{1}{p}\sum_{j=0}^{p-1}e^{\frac{\pi i j(2n-w^2)}{p}}a(n).
\end{equation}
Combining \eqref{w1eq} and \eqref{w2eq} then gives \eqref{alpha2exact} as desired. Similar to the asymptotics for $\alpha_{1,r}(n)$, the asymptotics for $\alpha_{2,p}(w;n)$ are obtained directly from the asymptotic expression \eqref{aasym1} for $a(n)$. Some care must be taken however, as the sum
\begin{equation*}
    \sum_{j=0}^{p-1}e^{\frac{\pi i j(2n-w^2)}{p}}
\end{equation*}
may vanish depending on the value of $w$, as indicated by \eqref{alpha2asym}. We draw particular attention to the case when $w^2$ is an even integer. Namely, writing $w^2=2m$, we have
\begin{equation}\label{w2new}
    \alpha_{2,p,2}(w;n)=
    \begin{cases}
        a(n)&(p\mid n-m),\\
        0&\text{(otherwise)}.
    \end{cases}
\end{equation}
In addition, if $p\mid m$ we can do even more. In particular, we have the following corollary.

\begin{corollary}\label{alpha2cor}
    Let $\alpha_{2,p}'(w;n)=\alpha_{2,p}(w;pn)$. If $w^2=2m$ is an even integer and $p\mid m$, then
    \begin{equation*}
        Z_w(q)=\sum_{n \in \mathbb{Z}} \alpha_{2,p}'(w;n)q^n.
    \end{equation*}
    That is, all of the non-integer powers of $q$ vanish from the expansion of $Z_w(q)$. Moreover,
    \begin{equation*}
        \alpha_{2,p}'(w;n)=a(pn)+
        \begin{cases}
            \frac{\delta_{w,0}}{p^2}a\left(\frac{n}{p}\right)&(p\mid n)\\
            0&\text{(otherwise)}
        \end{cases}
        .
    \end{equation*}
     As a result, $\alpha_{2,p}'(w;n)\sim a(pn)$ and satisfies the same asymptotics \eqref{alpha1asym} as $\alpha_1(n)$.
\end{corollary}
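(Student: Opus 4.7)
The plan is to derive the corollary directly from the exact formula \eqref{alpha2exact}, specialised under the hypotheses on $w$. Write $\alpha_{2,p}(w;n) = \alpha_{2,p,1}(w;n) + \alpha_{2,p,2}(w;n)$ as in the preceding subsection, so that $\alpha_{2,p,1}$ comes from the $\delta_{w,0}/p^2$ term and $\alpha_{2,p,2}$ from the $j$-sum. Under the hypothesis $w^2 = 2m$, equation \eqref{w2new} already gives $\alpha_{2,p,2}(w;n) = a(n)$ if $p \mid n - m$ and $0$ otherwise, so the only further work needed is to exploit the extra assumption $p \mid m$.

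Next, the hypothesis $p \mid m$ reduces the congruence $p \mid n-m$ to $p \mid n$, so $\alpha_{2,p,2}(w;n)$ vanishes whenever $p \nmid n$. Since $\alpha_{2,p,1}(w;n)$ is nonzero only when $p^2 \mid n$ (which in particular forces $p \mid n$), we conclude that $\alpha_{2,p}(w;n) = 0$ whenever $p \nmid n$. This immediately shows that only integer powers of $q$ survive in the expansion of $Z_w(q)$, and the change of index $n \mapsto pn$ rewrites $\sum_n \alpha_{2,p}(w;n) q^{n/p}$ as $\sum_n \alpha_{2,p}'(w;n) q^n$.

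The explicit formula for $\alpha_{2,p}'(w;n)$ then follows by evaluating $\alpha_{2,p,1}(w;pn)$ and $\alpha_{2,p,2}(w;pn)$ separately. For the first piece, $p^2 \mid pn$ is equivalent to $p \mid n$, and in that case the argument of $a$ becomes $pn/p^2 = n/p$, giving the case-split coefficient $\delta_{w,0}/p^2$; for the second piece, $p \mid pn$ holds trivially, so $\alpha_{2,p,2}(w;pn) = a(pn)$ unconditionally. Summing the two pieces yields the stated identity.

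Finally, for the asymptotics the term $a(pn)$ dominates: by \eqref{aasym1}, $a(pn)$ grows like $e^{4\pi\sqrt{pn}}/(pn)^{13/2}$ up to lower-order Bessel factors, whereas $a(n/p)$ grows only like $e^{4\pi\sqrt{n/p}}$, which is exponentially smaller. Substituting $r = p$ into \eqref{alpha1asym} produces precisely the asymptotic claimed. The only step that requires any care, rather than being pure bookkeeping, is tracking how the two divisibility conditions $p \mid n-m$ and $p^2 \mid n$ interact under the substitution $n \mapsto pn$; this is where the hypothesis $p \mid m$ is used crucially, and is the one spot where a sign or indexing error could easily slip in.
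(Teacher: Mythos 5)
Your proposal is correct and follows essentially the same route as the paper, which simply applies \eqref{w2new} and observes that $p\mid m$ makes the condition $p\mid n-m$ equivalent to $p\mid n$; your additional bookkeeping of $\alpha_{2,p,1}(w;pn)$ and $\alpha_{2,p,2}(w;pn)$ and the dominance of $a(pn)$ in the asymptotic is exactly what the paper leaves implicit.
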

\begin{proof}
    We simply apply \eqref{w2new}, noting that if $p\mid m$ then $p\mid n-m$ is equivalent to $p\mid n$.
\end{proof}

\subsection{Exact formulae for $\alpha_{3,p}(n)$}
Finally we turn to proving an exact formula for the third Vafa--Witten invariant in question. Let
\begin{align}
    \sum_{n\geq 0}\alpha_{3,p,1}(n)q^{\frac{n}{p}}&=\frac{1}{p^2}q^p\eta\left(q^p\right)^{-24},\label{alpha31}\\
    \sum_{n\geq 0}\alpha_{3,p,2}(n)q^{\frac{n}{p}}&=p^{21}q^p\eta\left(q^{\frac{1}{p}}\right)^{-24},\label{alpha32}\\
    \sum_{n\geq 0}\alpha_{3,p,3}(n)q^{\frac{n}{p}}&=p^{\rho(S)-1}q^p\sum_{j=1}^{p-1}\eta\left(e^{\frac{2\pi i j}{p}}q^{\frac{1}{p}}\right)^{-24}\label{alpha33}
\end{align}
so that $\alpha_{3,p}(n)=\alpha_{3,p,1}(n)+\alpha_{3,p,2}(n)+\alpha_{3,p,3}(n)$. Now, \eqref{alpha31} and \eqref{alpha32} are simple transformations of the function $\eta(q)^{-24}=\sum a(n)q^n$. In particular
\begin{equation*}
    \alpha_{3,p,1}(n)=
    \begin{cases}
        \frac{1}{p^2}a\left(\frac{n}{p^2}-1\right)& ( p^2 \mid n),\\
        0&\text{(otherwise)}
    \end{cases}
\end{equation*}
and
\begin{equation*}
    \alpha_{3,p,2}(n)=p^{21}a(n-p^2).
\end{equation*}
As for \eqref{alpha33}, we note similarly as in Section \ref{alpha1sect} that the $\frac{n}{p}$-th coefficient of 
\begin{equation*}
    \eta\left(e^{\frac{2\pi ij}{p}}q^{\frac{1}{p}}\right)
\end{equation*}
is given by $\zeta_p^{jn}a(n)$. Thus, the $\frac{n}{p}$-th coefficient of
\begin{equation*}
    \sum_{j=1}^{p-1}\eta\left(e^{\frac{2\pi i j}{r}}q^{\frac{1}{p}}\right)^{-24}=-\eta\left(q^{\frac{1}{p}}\right)^{-24}+\sum_{j=0}^{p-1}\eta\left(e^{\frac{2\pi i j}{p}}q^{\frac{1}{p}}\right)^{-24}
\end{equation*}
is $(p-1)a(n)$ if $p\mid n$ or $-a(n)$ if $p\nmid n$. Therefore,
\begin{equation*}
    \alpha_{3,p,3}(n)=
    \begin{cases}
        p^{\rho(S)-1}(p-1)a(n-p^2)&(p\mid n),\\
        -p^{\rho(S)-1}a(n-p^2)&(p\nmid n).
    \end{cases}
\end{equation*}
Combining our expressions for $\alpha_{3,p,1}(n)$, $\alpha_{3,p,2}(n)$ and $\alpha_{3,p,3}(n)$ gives \eqref{alpha3exact} as required. Again, to obtain the asymptotics \eqref{alpha3asym}, we use the asymptotic expression \eqref{aasym1} for $a(n)$.

We also note that the exact formula \eqref{alpha3exact} for $\alpha_{3,p}(n)$ simplifies in the case when the surface $S$ is supersingular (i.e.\@ $\rho(S)=22$). In particular, all non-integral powers of $q$ vanish, leading to the following corollary.

\begin{corollary}\label{alpha3cor}
Let $\alpha_{3,p}'(n)=\alpha_{3,p}(pn)$. Then, if the surface in question is supersingular, we have
\begin{equation*}
    Z_{tw}^{SU(p)/\Z_p}(q)=\sum_{n\geq 0}\alpha_{3,p}'(n)q^n
\end{equation*}
and
\begin{equation*}
        \alpha_{3,p}'(n)= p^{22}a(p(n-p))
        +
        \begin{cases}
            \frac{1}{p^2}a\left(\frac{n}{p}-1\right)& (p\mid n)\\
            0&\text{(otherwise)}
        \end{cases}
        .
\end{equation*}
Furthermore,
    \begin{align*}
        \alpha_{3,p}'(n)&=\frac{2\pi p^{\frac{31}{2}}}{n^{\frac{13}{2}}}I_{13}\left(4\pi\sqrt{pn}\right)\left(1+O_p\left(\frac{1}{e^{2\pi\sqrt{pn}}}\right)\right) \sim \frac{p^{\frac{61}{4}}e^{4\pi\sqrt{pn}}}{\sqrt{2}n^{\frac{27}{4}}}.
    \end{align*}
\end{corollary}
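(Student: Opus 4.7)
The plan is to derive this corollary as a direct specialisation of the exact formula \eqref{alpha3exact} for $\alpha_{3,p}(n)$ under the hypothesis $\rho(S)=22$, followed by a substitution $n\mapsto pn$ and an application of the asymptotic \eqref{aasym1} for the coefficients $a(n)$.

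First I would substitute $\rho(S)=22$ into \eqref{alpha3exact}. The third piece then contributes $p^{21}(p-1)a(n-p^2)$ when $p\mid n$ and $-p^{21}a(n-p^2)$ otherwise. Adding the first piece $p^{21}a(n-p^2)$, in the case $p\nmid n$ the two terms cancel exactly, while in the case $p\mid n$ they combine to give $p^{21}a(n-p^2)+p^{21}(p-1)a(n-p^2)=p^{22}a(n-p^2)$. Thus $\alpha_{3,p}(n)$ vanishes unless $p\mid n$, which is precisely the statement that every non-integral power of $q$ in the generating function \eqref{gen3} has coefficient zero. Rewriting the expansion $\sum_n \alpha_{3,p}(n)q^{n/p}$ in terms of the subsequence indexed by multiples of $p$ immediately gives $Z_{tw}^{SU(p)/\Z_p}(q)=\sum_{n\geq 0}\alpha_{3,p}'(n)q^n$.

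Next, replacing $n$ by $pn$ in the surviving formula, the principal term becomes $p^{22}a(pn-p^2)=p^{22}a(p(n-p))$. The second-case term from \eqref{alpha3exact} requires $p^2\mid pn$, i.e.\@ $p\mid n$, under which $\tfrac{pn-p^2}{p^2}=\tfrac{n}{p}-1$; this yields the advertised case distinction with the $\tfrac{1}{p^2}a(\tfrac{n}{p}-1)$ correction.

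For the asymptotic, I would apply \eqref{aasym1} to $a(p(n-p))$. Writing $m=p(n-p)=pn-p^2$, one has $m^{-13/2}=(pn)^{-13/2}(1+O_p(1/n))$ and, since $\sqrt{m}=\sqrt{pn}-O_p(1/\sqrt{n})$, the Bessel factor $I_{13}(4\pi\sqrt{m})$ agrees with $I_{13}(4\pi\sqrt{pn})$ up to a multiplicative $1+O_p(1/\sqrt{n})$, which is absorbed into the exponentially small error $O_p(e^{-2\pi\sqrt{pn}})$ arising from \eqref{aasym1}. Multiplying by $p^{22}$ produces the factor $p^{22-13/2}=p^{31/2}$, yielding the stated Bessel asymptotic. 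The residual correction term $\tfrac{1}{p^2}a(\tfrac{n}{p}-1)$ grows like $\exp(4\pi\sqrt{n/p})$, which is dominated by $\exp(4\pi\sqrt{pn})e^{-2\pi\sqrt{pn}}$ for $p\geq 2$, so it is absorbed into the error as well. Finally, substituting the elementary $I$-Bessel asymptotic \eqref{I-bessel asymp}, $I_{13}(x)\sim e^x/\sqrt{2\pi x}$, with $x=4\pi\sqrt{pn}$ gives the closed form $p^{61/4}e^{4\pi\sqrt{pn}}/(\sqrt{2}\,n^{27/4})$ after combining the powers of $p$ and $n$.

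I do not anticipate a genuine obstacle: the whole corollary is bookkeeping on top of Theorem~\ref{thm: main exact} and Lemma~\ref{anlem}. The only point that requires mild care is checking that the correction term $\tfrac{1}{p^2}a(\tfrac{n}{p}-1)$ really is exponentially smaller than $p^{22}a(p(n-p))$ uniformly in $n$, so that it can be swept into the $O_p(e^{-2\pi\sqrt{pn}})$ error; this reduces to the inequality $4\pi\sqrt{n/p}\leq 4\pi\sqrt{pn}-2\pi\sqrt{pn}=2\pi\sqrt{pn}$, i.e.\@ $p\geq 4/1=4$ in the worst rounding, but in fact the comparison is clear for all primes $p\geq 2$ once one keeps track of the polynomial prefactors, so this step poses no real difficulty.
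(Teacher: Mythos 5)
Your proposal is essentially the paper's proof: the paper disposes of this corollary in one line (``Let $\rho(S)=22$ in \eqref{alpha3exact} and \eqref{alpha3asym}''), and you carry out exactly that specialisation, correctly observing the cancellation $p^{21}-p^{\rho(S)-1}=0$ when $p\nmid n$ (which is what kills the non-integral powers of $q$) and the combination $p^{21}+p^{21}(p-1)=p^{22}$ when $p\mid n$, followed by the substitution $n\mapsto pn$. The exact-formula part of your argument is complete and correct.

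One step in your asymptotic argument does not hold as written: you claim that the multiplicative factor $1+O_p(1/\sqrt{n})$ arising from replacing $I_{13}\bigl(4\pi\sqrt{p(n-p)}\bigr)/(n-p)^{13/2}$ by $I_{13}\bigl(4\pi\sqrt{pn}\bigr)/n^{13/2}$ ``is absorbed into the exponentially small error $O_p(e^{-2\pi\sqrt{pn}})$.'' A relative error of size $1/\sqrt{n}$ (indeed $e^{4\pi(\sqrt{pn-p^2}-\sqrt{pn})}=1-2\pi p^{3/2}/\sqrt{n}+\cdots$) cannot be absorbed into a relative error of size $e^{-2\pi\sqrt{pn}}$; with the Bessel argument rewritten as $4\pi\sqrt{pn}$ the honest error term is $O_p(n^{-1/2})$, not exponentially small. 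This over-claim is inherited from the paper itself (the same issue is present in \eqref{alpha1asym} and \eqref{alpha3asym}, where $a((n-r)r)$ is replaced by $a(rn)$ with only an exponentially small error claimed), so you are faithfully reproducing the stated result, but you should either keep the argument $\sqrt{p(n-p)}$ inside the Bessel function to retain the exponentially small error, or weaken the error to $O_p(n^{-1/2})$. The final ``$\sim$'' statement is unaffected, and your handling of the correction term $\tfrac{1}{p^2}a(\tfrac{n}{p}-1)$ (exponent $4\pi\sqrt{n/p}\le 2\pi\sqrt{pn}$ for $p\ge 2$, with equality of exponents at $p=2$ compensated by the polynomial prefactors and the factor $p^{22}$) is fine.
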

\begin{proof}
    Let $\rho(S)=22$ in \eqref{alpha3exact} and \eqref{alpha3asym}.
\end{proof}

\subsection{Higher order Tur\'{a}n inequalities}\label{Sec: turan}
In this section we prove that all relevant Vafa--Witten invariants asymptotically satisfy the higher-order Tur\'{a}n inequalities. 
\begin{proof}[Proof of Theorem \ref{Thm: main asymp Turan}]
 We have
\begin{align*}
\alpha_{1,r}(n) = \frac{2 \pi}{(nr)^{\frac{13}{2}}} I_{13}(4\pi \sqrt{nr}) \left(1+O_r\left(\frac{1}{e^{2\pi\sqrt{rn}}}\right)\right)
\end{align*}
as $n \to \infty$. This implies the asymptotic formula
\begin{align*}
    \log\left(\frac{\alpha_{1,r}(n+k)}{\alpha_{1,r}(n)}\right) &\sim \log \left( \frac{n^\frac{13}{2} I_{13}(4\pi \sqrt{(n+j)r})}{(n+j)^{\frac{13}{2}} I_{13}(4\pi\sqrt{nr})}\right) \\
    &= 4\pi \sqrt{r} \sum_{i \geq 1} \binom{1/2}{i} \frac{k^i}{n^{\frac{1}{2}-i}} + \frac{21}{4} \sum_{i\geq 1} \frac{(-1)^{i-1} k^i}{in^i} + \sum_{s,t \geq 1} c_t \binom{-t}{s} \frac{ k^s}{n^{s+t}}
\end{align*}
for some (computable) constants $c_t$ arising from the asymptotic expansion for the $I$-Bessel function, valid to all orders of $n^{-\frac{1}{2}}$. We may then apply Theorem \ref{Thm: Gorz} with $A(n)$ and $\delta(n)$ given by
\begin{align*}
    A(n) = 2\pi \sqrt{\frac{r}{n}} +O\left(\frac{1}{n}\right),\qquad \delta(n)^2 = \frac{\pi}{2} \sqrt{r}n^{-\frac{3}{2}} + O \left( n^{-\frac{5}{4}}\right).
\end{align*}
Noting that the asymptotic hyperbolicity of the relevant Jensen polynomials implies that all higher order Tur\'{a}n inequalities are satisfied for large enough $n$, finishing the proof in this case. The arguments for $\alpha_{2,p}$ and $\alpha_{3,p}$ are very similar, where for $\alpha_{2,p}$ one accounts for the variations depending on $w^2$.
\end{proof}

\section{Further results}
We end by indicating related results and their implications to $\alpha_{1,r}(n)$, $\alpha_{2,p}(n)$ and $\alpha_{3,p}(n)$. 
\subsection{Laguerre--Poly\'{a} classes of functions}
A real entire function $\psi(x) = \sum_{k \geq 0} \frac{\gamma_k}{k!}x^k$ is said to lie in the Laguerre--Poly\'{a} class if it can be represented by
\begin{align*}
    \psi(x) = Cx^m e^{bx-ax^2} \prod_{k=1}^r \left(1+\frac{x}{x_k}\right) e^{-\frac{x}{x_k}}, \qquad 0\leq r\leq \infty,
\end{align*}
with $b,C,x_k \in \mathbb{R}$, $m \in \mathbb{Z}_{\geq 0}$, $a\geq 0$, and $\sum_{k=1}^{r} x_k^{-2} < \infty$. In \cite{Wagner}, Wagner introduced new classes of objects known as the shifted-Laguerre--Poly\'{a} classes, and we recall the definitions here.

A real entire function $\psi(x)$ lies in the shifted-Laguerre--Poly\'{a} class of degree $d$ if it is the uniform limit of polynomials $\{\psi_k(x)\}_{k \geq 0}$ with the property that there exists $N(d)$ such that $\psi_n^{(d_n-d)}(x)$ has all real roots for $n \geq N(d)$. The function $\psi$ is then said to lie in the shifted Laguerre--Poly\'{a} class if it satisfies the above for all $d \in \mathbb{N}$.

Most importantly for the present paper is \cite[Theorem 1.1]{Wagner}, which states that if $\{\gamma_k\}_{k\geq 0 }$ is a sequence of eventually non-negative reals, then the following are equivalent\footnote{Here we use the second remark under \cite[Theorem 1.1]{Wagner} to rewrite the theorem in the general setting where we do not assume that the real roots have the same sign.};

\begin{enumerate}[label=\roman*)]
\item The sequence $\{\gamma_k\}$ is a shifted multiplier sequence of type 2 (in the sense of \cite{Wagner}).

    \item For every $d \in \mathbb{N}$ there exist $N(d)$ such that the Jensen polynomial $J_{\gamma}^{d,n}(X)$ has all real roots for $n \geq N(d)$.
    
    \item The formal power series $\sum_{k \geq 0}\frac{\gamma_k}{k!}x^k$ defines a function in the shifted Laguerre--Poly\'{a} class.
\end{enumerate}

In Section \ref{Sec: turan}, we proved that the Jensen polynomial attached to each of the Vafa--Witten invariants $\alpha_{1,r}(n)$, $\alpha_{2,p}(n)$ and $\alpha_{3,p}(pn)$ (throughout replacing $\alpha_{2,p}(n)$ by $\alpha_{2,p}(pn)$ or $\alpha_{2,p}(p^2n)$ as necessary) for fixed $r$ and $p$ tend to the Hermite polynomials as $n \to \infty$, and so have all real roots. Therefore, we immediately obtain the following corollary.

\begin{corollary}\label{Cor: LP}
Let $r$ be a fixed positive integer and $p$ be a fixed prime. Then each sequence $\alpha_{1,r}$, $\alpha_{2,p}$ and $\alpha_{3,p}$ defines a shifted multiplier sequence of type 2, and the formal power series
\begin{align*}
	\sum_{n \geq 0} \frac{\alpha_{1,r}(n)}{n!} x^n, \qquad 	\sum_{n \geq 0} \frac{\alpha_{3,p}(pn)}{(pn)!} x^{pn},
\end{align*}
along with
\begin{align*}
 \sum_{n \geq 0} \frac{\alpha_{2,p}(w;n)}{(n)!} x^{n} \hspace{5pt} \text{ if $\gcd(w^2,2p)=1$}, \qquad	\sum_{n \geq 0} \frac{\alpha_{2,p}(w;pn)}{(pn)!} x^{pn} \hspace{5pt} \text{ if $w^2 =2pm$},
\end{align*}
and
\begin{align*}
  \sum_{n \geq 0} \frac{\alpha_{2,p}(w;p^2n)}{(p^2n)!} x^{p^2n}
\end{align*}
each define a function in the shifted Laguerre--Poly\'{a} class.
\end{corollary}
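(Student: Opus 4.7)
The plan is to reduce all four claims to a single mechanism: in each case we will verify the hypotheses of Theorem~\ref{Thm: Gorz} for the relevant sequence, using the dominant $I_{13}$-asymptotic already supplied by Theorem~\ref{thm: main asymp}. The cleanest case is $\alpha_{1,r}(n)$. Starting from \eqref{alpha1asym}, I would insert the full asymptotic expansion $I_{13}(x) = \frac{e^x}{\sqrt{2\pi x}}\bigl(1 + \sum_{t\geq 1} c_t x^{-t}\bigr)$ (see \eqref{I-bessel asymp}) into $\alpha_{1,r}(n+k)/\alpha_{1,r}(n)$, take logs, and expand in $k$. The exponentially small error in \eqref{alpha1asym} contributes negligibly. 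The resulting expansion has the form
\begin{equation*}
  \log\!\frac{\alpha_{1,r}(n+k)}{\alpha_{1,r}(n)} = 4\pi\sqrt{r}\sum_{i\geq 1}\binom{1/2}{i}\frac{k^i}{n^{i-1/2}} + \frac{21}{4}\sum_{i\geq 1}\frac{(-1)^{i-1}k^i}{in^i} + \sum_{s,t\geq 1} c_t\binom{-t}{s}\frac{k^s}{n^{s+t}},
\end{equation*}
from which one reads off $A(n) = 2\pi\sqrt{r/n} + O(1/n)$ and $\delta(n)^2 = \tfrac{\pi}{2}\sqrt{r}\,n^{-3/2} + O(n^{-5/4})$. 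Checking that each higher coefficient $g_i(n)$ is $o(\delta(n)^i)$ is routine because the leading $k^i$ term in the expansion is of order $n^{-(i-1/2)}$, which is indeed $o(n^{-3i/4})=o(\delta(n)^i)$ for $i\geq 3$. Theorem~\ref{Thm: Gorz} then gives asymptotic hyperbolicity of the Jensen polynomials, which is equivalent to the order-$k$ Tur\'{a}n inequalities holding for all large $n$.

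For $\alpha_{3,p}(pn)$, the decisive simplification is that after restricting to multiples of $p$ the parameter $c_p$ in \eqref{alpha3asym} becomes the constant $p^{21}+p^{\rho(S)-1}(p-1)$, so the sequence obeys a single uniform $I_{13}(4\pi\sqrt{pn})$ asymptotic. The same extraction of $A(n)$ and $\delta(n)^2$ then applies with $r$ replaced by $p$ (up to harmless constant factors from the $c_p$), and Theorem~\ref{Thm: Gorz} delivers the conclusion. Had one not passed to the subsequence $pn$, the value of $c_p(n)$ would oscillate with $n\bmod p$ and no clean log-ratio asymptotic would exist — this is precisely why the theorem is stated in terms of $\alpha_{3,p}(pn)$.

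The main obstacle is the case analysis for $\alpha_{2,p}(w;n)$, where \eqref{alpha2asym} has three branches depending on residues of $2n-w^2$ modulo $2p$ and on divisibility of $n$ by $p^2$. For the argument to go through, consecutive terms $\alpha_{2,p}(w;n+k)$ and $\alpha_{2,p}(w;n)$ must lie in the same branch so that the log-ratio has a single uniform $I_{13}$-asymptotic. This is exactly what each hypothesis guarantees: when $\gcd(2p,w^2)=1$ the congruence $2n\equiv w^2\pmod{2p}$ has a fixed solution class, so $\alpha_{2,p}(w;n)$ restricted to that class is governed uniformly by the first branch; when $w^2=2pm$ the shift $n\mapsto pn$ forces $2p\mid 2pn - w^2$, again pinning us in the first branch; and when $n\mapsto p^2 n$ the divisibility $p^2\mid p^2 n$ puts the sequence uniformly in the second branch (with $\delta_{w,0}=1$) or again in the first (when $\delta_{w,0}=0$ and the first branch governs). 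In each scenario, once the uniform asymptotic $\frac{C}{n^{13/2}} I_{13}(c\sqrt{n})(1+O(e^{-c'\sqrt{n}}))$ is established, the expansion of $\log(\alpha_{2,p}(w;n+k)/\alpha_{2,p}(w;n))$ proceeds identically to the $\alpha_{1,r}$ case, producing $A(n)$ and $\delta(n)^2$ of the required form and making Theorem~\ref{Thm: Gorz} applicable. The bookkeeping of which branch governs each stated subsequence is the only delicate point; the analysis itself is otherwise parallel to the proof for $\alpha_{1,r}(n)$.
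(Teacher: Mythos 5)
Your proposal establishes, by the same GORZ-based mechanism the paper itself uses, that for each fixed degree $d$ the Jensen polynomials $J_{\alpha}^{d,n}$ attached to the relevant (sub)sequences are hyperbolic for all sufficiently large $n$. But that is the content of Theorem \ref{Thm: main asymp Turan}, which the paper proves separately in Section \ref{Sec: turan}; it is not the statement of Corollary \ref{Cor: LP}. The corollary asserts two further things: that each sequence is a shifted multiplier sequence of type 2, and that the exponential generating functions $\sum_{n}\alpha(n)x^{n}/n!$ define functions in the shifted Laguerre--P\'{o}lya class. Nowhere do you connect eventual hyperbolicity of the Jensen polynomials to either conclusion. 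The missing ingredient is Wagner's equivalence theorem (\cite{Wagner}, Theorem 1.1, recalled just above the corollary): for an eventually non-negative real sequence $\{\gamma_k\}$, the condition ``for every $d$ there is $N(d)$ such that $J_{\gamma}^{d,n}$ has all real roots for $n\geq N(d)$'' is equivalent both to $\{\gamma_k\}$ being a shifted multiplier sequence of type 2 and to $\sum_k \gamma_k x^k/k!$ lying in the shifted Laguerre--P\'{o}lya class. The paper's entire proof of the corollary is this one-line application of Wagner's theorem, together with the observation (needed for its hypothesis) that the sequences are eventually positive, which follows from the asymptotics of Theorem \ref{thm: main asymp}. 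Without that step your argument proves the prerequisite but stops short of the statement.

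A secondary point: your branch bookkeeping for $\alpha_{2,p}(w;n)$ in the case $\gcd(2p,w^2)=1$ is wrong. That hypothesis forces $w^2$ to be odd, so $2n-w^2$ is odd and the congruence $2p\mid 2n-w^2$ is never satisfied; the sequence is therefore never ``governed uniformly by the first branch'' of \eqref{alpha2asym} as you claim, and (since $w\neq 0$ kills $\delta_{w,0}$) the dominant contribution must instead be traced back through the exponential sum in \eqref{w2eq}. This does not affect the main structural gap above, but as written that part of your case analysis would not survive scrutiny.
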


\subsection{Representations as Poincar\'{e} series and algebraic formulae}
Let $\Gamma_\infty$ be the stabilizer of $i\infty$ in $\Gamma \coloneqq \SL_2(\Z)$ and $\mid_{k,\nu}$ the usual Petersson slash operator with weight $k$ and multiplier $\nu$ (see e.g.\@ \cite{Iw}). For $\kappa \in \frac{1}{2}\Z$ with $\kappa \geq \frac{5}{2}$, and $m \in \frac{1}{24}\Z$ such that $\kappa - 12m \in 2\Z$ the Poincar\'{e} series of weight $\kappa$ and index $m$ that we consider is defined by
\begin{align*}
    P_{k,m}(\tau) \coloneqq \sum_{M \in \Gamma_\infty \backslash \Gamma} q^m \mid_{\kappa,\nu_\eta^{24m}} M,
\end{align*}
with $\nu_\eta$ the multiplier of $\eta$. In \cite{PW}, the authors showed that $a(n)$ is given by the coefficient of $q$ in $P_{14, 1-n}$, and thus we can build linear combinations of weight $14$ Poincar\'{e} series with varying index whose coefficients are precisely $\alpha_{1,r}(n)$, $\alpha_{2,p}(n)$ or $\alpha_{3,p}(n)$. For example, for fixed $r$, we have
\begin{align*}
    \alpha_{1,r}(n) = [\text{coeff. }q] \sum_{s\mid (n-r,r)} \frac{1}{s^2} P_{14,1-\frac{(n-r)r}{s^2}}(\tau).
\end{align*}
Although this formula looks a little messy, it simplifies nicely in explicit examples; taking $r=2$ for instance gives
\begin{align*}
    \alpha_{1,2}(n) = [\text{coeff. }q] \left( P_{14,3-4m}(\tau) + \frac{\delta_{\text{odd}}}{4}  P_{14, 2-m}(\tau) \right),
\end{align*}
where $\delta_{\text{odd}} = 0$ if $n=2m+1$ and $1$ if $n=2m$. Formulae for the remaining Vafa--Witten invariants as coefficients of linear combinations of Poincar\'{e} series follow similarly.

In their famous paper \cite{BO}, Bruinier and Ono proved a formula for the coefficients of $\frac{1}{\eta(q)}$ as a sum of certain algebraic integers. Since each of the Vafa--Witten invariants in this paper have a generating function that is (essentially) a linear combination of $\frac{1}{\eta(q)^{24}}$, one could obtain thus obtain exact algebraic formulae for each of $\alpha_{1,r}(n)$, $\alpha_{2,p}(n)$ and $\alpha_{3,p}(n)$ in a similar fashion.

\end{document}